\let\stdhline\hline
\let\TSp\thinspace
\def\DSp{\thinspace\thinspace}
\def\lbr{\raise 1pt\hbox{[}}
\def\rbr{\raise 1pt\hbox{]}}
\def\gobble#1{}
\def\hline{\multispan\numcolumns\hrulefill\cr}
\def\torframe#1{\vtop{\vbox{\hrule\hbox{\vrule\strut #1\vrule}}\hrule}}
\def\dblhline{\hline height 0.16667em\gobble&\emptyline\cr\hline}
\newbox\tablebox
\let\lra\longrightarrow
\def\vlra{\DOTSB\protect\relbar\protect\joinrel
  \protect\relbar\protect\joinrel\rightarrow}
\begin{document}
\title[Torsion in Khovanov homology of homologically thin knots]
{Torsion in Khovanov homology\\ of homologically thin knots}
\author[A.~Shumakovitch]{Alexander Shumakovitch}
\address{Department of Mathematics, The George Washington University,
Phillips Hall, 801\ 22nd St. NW, Suite \#739, Washington, DC 20052, U.S.A.}
\email{Shurik@gwu.edu}
\thanks{The author is partially supported by a Simons Collaboration Grant for
Mathematicians~\#279867}
\subjclass[2010]{57M25, 57M27}
\keywords{Khovanov homology, torsion in Khovanov homology, homologically thin
links, Bockstein spectral sequence}
\begin{abstract}
We prove that every $\Z_2$H-thin link has no $2^k$-torsion for \hbox{$k>1$} in
its Khovanov homology. Together with previous results by Eun Soo
Lee~\cite{Lee-H_slim,Lee-patterns} and the author~\cite{me-torsion}, this implies
that integer Khovanov homology of non-split alternating links is completely
determined by the Jones polynomial and signature. Our proof is based on
establishing an algebraic relation between Bockstein and Turner differentials
on Khovanov homology over $\Z_2$. We conjecture that a similar relation
exists between the corresponding spectral sequences.
\end{abstract}
\maketitle

\setcounter{footnote}{1}

\vskip -0.65cm\null
\section{Introduction}
Let $L$ be an oriented link in the Euclidean space $\R^3$ represented by a
planar diagram~$D$. In a seminal paper~\cite{Kh-Jones}, Mikhail Khovanov
assigned to $D$ a family of abelian groups $\CalH^{i,j}(L)$, whose isomorphism
classes depend on the isotopy class of $L$ only. These groups are defined as
homology groups of an appropriate (graded) chain complex $\CalC(D)$ with
integer coefficients.
The main property of the Khovanov homology is that it {\em categorifies} the
Jones polynomial. More specifically, let $J_L(q)$ be a version of the Jones
polynomial of $L$ that satisfies the following skein relation and
normalization:
\begin{equation}\label{eq:K-jones-skein}
-q^{-2}J_{\includegraphics[scale=0.45]{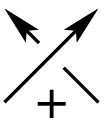}}(q)
+q^2J_{\includegraphics[scale=0.45]{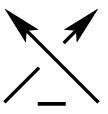}}(q)
=(q-1/q)J_{\includegraphics[scale=0.45]{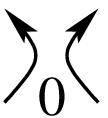}}(q);
\qquad
J_{\includegraphics[scale=0.45]{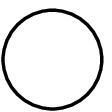}}(q)=q+1/q.
\end{equation}
Then $J_L(q)$ equals the (graded) Euler characteristic of the Khovanov chain
complex:
\begin{equation}\label{eq:khpol-jones}
J_L(q)=\chi_q(\CalC(D))=\sum_{i,j}(-1)^iq^jh^{i,j}(L),
\end{equation}
where $h^{i,j}(L)=\rk(\CalH^{i,j}(L))$, the Betti numbers of $\CalH(L)$.
We explain the main ingredients of Khovanov's construction
in Section~\ref{sec:ingreds}. The reader is referred
to~\cite{BN-first,Kh-Jones} for a detailed treatment.

In this paper, we consider a more general setup by allowing
$\CalC(D)$ to consist of free $R$-modules, where $R$ is a commutative ring
with unity. This results in a Khovanov homology theory with coefficients in
$R$. We are mainly interested in the cases when $R=\Z$, $\Q$, or $\Z_2$.

\begin{defins}[\cite{me-torsion}, cf. Khovanov~\cite{Kh-patterns}]
A link $L$ is said to be {\em homologically thin} over a ring $R$ or simply
{\em $R$H-thin} if its Khovanov homology with coefficients in $R$
is supported on two adjacent diagonals $2i-j=const$. 
A link $L$ is said to be {\em homologically slim} or simply {\em H-slim} if it
is $\Z$H-thin and all its homology groups that are supported on the {\em upper}
diagonal have no torsion.
A link $L$ that is not $R$H-thin is said to be {\em $R$H-thick}.
\end{defins}

\begin{prop}\label{prop:all-thins}
If a link is $\Z$H-thin, then it is $\Q$H-thin as well by definition.
Consequently, a $\Q$H-thick link is necessarily $\Z$H-thick. If a link is
H-slim, then it is $\Z_m$H-thin for every $m>1$ by the Universal Coefficient
Theorem.
\end{prop}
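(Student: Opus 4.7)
The plan is to treat the three assertions in order, each of which reduces to a direct application of the Universal Coefficient Theorem (UCT).

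For the first implication, $\Z$H-thin $\Rightarrow$ $\Q$H-thin, I would observe that since $\Q$ is a flat $\Z$-module the UCT collapses to $\CalH^{i,j}(L;\Q)\cong\CalH^{i,j}(L;\Z)\otimes_\Z\Q$. Tensoring with $\Q$ kills torsion but does not shift bigrading, so the bigraded support of $\CalH(L;\Q)$ is contained in that of $\CalH(L;\Z)$; in particular, the two-adjacent-diagonal support is inherited. The second assertion is then simply the contrapositive of the first and needs no independent argument.

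The substantive step is H-slim $\Rightarrow$ $\Z_m$H-thin. Here the cohomological UCT reads
\[
\CalH^{i,j}(L;\Z_m)\;\cong\;\bigl(\CalH^{i,j}(L;\Z)\otimes\Z_m\bigr)\;\oplus\;\operatorname{Tor}\bigl(\CalH^{i+1,j}(L;\Z),\Z_m\bigr).
\]
Suppose $\CalH(L;\Z)$ is supported on the two diagonals $\delta:=2i-j\in\{D,D+2\}$, with $\delta=D$ corresponding to the \emph{upper} one (larger $j$ for fixed $i$). The first summand evidently stays on those two diagonals. The second summand, located at bigrading $(i,j)$, is contributed by torsion at bigrading $(i+1,j)$, so its diagonal coordinate $\delta$ is smaller by $2$ than that of the torsion group producing it. Consequently, torsion on the lower diagonal $D+2$ contributes on $D$, which is admissible, whereas torsion on the upper diagonal $D$ would contribute on the forbidden diagonal $D-2$. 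The H-slim hypothesis is precisely the absence of torsion on the upper diagonal, so only the admissible case occurs and $\CalH(L;\Z_m)$ remains supported on $\{D,D+2\}$.

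The only step that calls for care is the sign and orientation bookkeeping: identifying which of the two diagonals counts as ``upper'' in the H-slim definition, and confirming the direction of the diagonal shift contributed by the UCT Tor term. Once these conventions are aligned, the proof is routine, and I do not anticipate any further obstacle.
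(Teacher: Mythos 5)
Your proof is correct and matches the argument the paper leaves implicit (the proposition is stated with the parenthetical hints ``by definition'' and ``by the Universal Coefficient Theorem'' but no written proof). You have correctly identified that the $\operatorname{Tor}$ term in the cohomological UCT picks up torsion from bigrading $(i+1,j)$ and hence shifts the diagonal coordinate $2i-j$ down by two, so that only torsion on the upper diagonal could leak outside the thin band, and the H-slim hypothesis forbids exactly that.
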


\begin{example}
Most of the $\Z$H-thin knots are H-slim. The first prime $\Z$H-thin knot that
is not H-slim is the mirror image of $16^n_{197566}$\footnote{This denotes the
non-alternating knot number 197566 with 16 crossings from the Knotscape knot
table~\cite{Knotscape}.}.
It is also $\Z_2$H-thick, see Figure~\ref{fig:diff-thickness}. In these
tables, columns and rows are marked with $i$- and $j$-grading of the Khovanov
homology, respectively. Only entries representing non-trivial groups are
shown. An entry of the form $a,b_2,c_4$ means that the corresponding group is
$\Z^a\oplus\Z_2^b\oplus\Z_4^c$.
\end{example}

H-slim knots possess several important properties that were observed
in~\cite{BN-first,Garoufalidis,me-torsion} and proved
in~\cite{Lee-H_slim,Lee-patterns,me-torsion}. We list them below.

\begin{figure}
\centerline{\resizebox{\hsize}{!}{\input knot-16n_197566-table}}
\par\medskip
Knot $16^n_{197566}$ is $\Q$H-thin, but $\Z$H-thick and $\Z_2$H-thick.
\par\bigskip\bigskip\bigskip
\centerline{\resizebox{\hsize}{!}{\input knot-16n_-197566-table}}
\par\medskip
Mirror image of the knot $16^n_{197566}$ is $\Q$H-thin and
$\Z$H-thin, but $\Z_2$H-thick.
\par\bigskip
\caption{Integral Khovanov homology of the knot $16^n_{197566}$ and
its mirror image.}\label{fig:diff-thickness}
\vskip 1in 
\end{figure}

\begin{thm}[Lee~\cite{Lee-H_slim,Lee-patterns}]\label{thm:lee-thin}
Every oriented non-split alternating link $L$ is H-slim and the Khovanov
homology of $L$ is supported on the diagonals $2i-j=\Gs(L)\pm1$, where
$\Gs(L)$ is the signature of $L$.
\end{thm}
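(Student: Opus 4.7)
The plan is to proceed by induction on the number of crossings of a reduced alternating diagram $D$ representing $L$. The base case handles diagrams with zero or one crossings, each of which (after removing any nugatory crossing) represents the unknot; its Khovanov homology is concentrated in bidegrees $(0,\pm 1)$, which lies on the diagonals $2i-j=\pm 1 = \Gs(L)\pm 1$ since $\Gs(\text{unknot})=0$.

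For the inductive step, pick a crossing $c$ of $D$ and form its two resolutions $D_0$ and $D_1$. A standard fact about reduced alternating diagrams is that $D_0$ and $D_1$, after removing any nugatory crossings produced, are themselves alternating diagrams of non-split links $L_0, L_1$ with strictly fewer crossings. The Khovanov skein long exact sequence associated with the unoriented resolution at $c$,
\begin{equation*}
\cdots\lra\CalH^{i-\alpha,j-\beta}(L_0)\lra\CalH^{i,j}(L)\lra\CalH^{i-\gamma,j-\delta}(L_1)\lra\CalH^{i+1-\alpha,j-\beta}(L_0)\lra\cdots
\end{equation*}
with shift parameters $(\alpha,\beta,\gamma,\delta)$ dictated by the sign of $c$ and the writhe, lets us transfer thinness from $L_0$ and $L_1$ to $L$. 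By induction, $\CalH(L_0)$ and $\CalH(L_1)$ each sit on two adjacent diagonals determined by $\Gs(L_0)$ and $\Gs(L_1)$; the classical comparison of signatures under smoothing in an alternating diagram (Gordon--Litherland, Traczyk) places all shifted contributions squarely on the two diagonals $2i-j=\Gs(L)\pm 1$ in $\CalH(L)$, establishing thinness.

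To upgrade thinness to H-slimness, note that by the inductive hypothesis the upper diagonals of $\CalH(L_0)$ and $\CalH(L_1)$ are free. A bigrading analysis of the connecting homomorphism shows that any $p$-torsion in $\CalH(L)$ must sit on the lower diagonal: the upper diagonal of $\CalH(L)$ receives only contributions from the upper free diagonals of $\CalH(L_0)$ and $\CalH(L_1)$, while the connecting map lowers the appropriate grading by one and deposits any image onto the lower diagonal. Combined with Proposition~\ref{prop:all-thins}, this completes the H-slim conclusion.

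The hardest step is handling the case when one of $D_0, D_1$ represents a \emph{split} link, because the signature behavior and the support of Khovanov homology of a split link shift irregularly relative to its non-split summands. One must either choose the crossing $c$ so that neither resolution splits nontrivially---possible when $D$ is reduced alternating and $L$ is non-split, via a careful picking argument on the Tait graph---or isolate the split summands and account for the additional tensor factor. A secondary subtlety lies in the H-slim step: ruling out that the connecting homomorphism injects a $\Z_2$ summand into the upper diagonal forces the inductive statement to be strengthened, keeping track of the exact parities of the $(i,j)$ positions where torsion can appear rather than merely the diagonal on which it sits.
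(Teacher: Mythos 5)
The paper does not prove Theorem~\ref{thm:lee-thin}: it is quoted as a background result and attributed entirely to Lee's papers~\cite{Lee-H_slim,Lee-patterns}, so there is no in-paper argument to compare against. Your outline does recognizably reconstruct the strategy of Lee's original proof --- induction on crossing number, the unoriented skein long exact sequence, and signature bookkeeping via Gordon--Litherland/Traczyk --- so at the level of strategy you are aimed at the right target.

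Two substantive remarks on the sketch itself. First, the ``hardest step'' you isolate --- avoiding split resolutions via a careful choice of crossing --- is not actually a difficulty. In a reduced alternating diagram of a non-split link every crossing is non-nugatory, i.e.\ the corresponding edge of the Tait graph is neither a bridge nor a loop; deleting or contracting such an edge keeps the graph connected, so both resolutions $D_0$ and $D_1$ remain connected alternating diagrams. By Menasco's theorem a connected alternating diagram represents a non-split link, so $L_0$ and $L_1$ are automatically non-split and no selection argument is needed. Second, and more importantly, the passage from $\Z$H-thinness to H-slimness is not established by what you wrote. The long exact sequence presents each $\CalH^{i,j}(L)$ as an extension of a subgroup of a shifted $\CalH(L_1)$ by a quotient of a shifted $\CalH(L_0)$, and such an extension can introduce torsion on the upper diagonal even when the flanking groups contributing to that diagonal are free; asserting that the connecting homomorphism ``deposits any image onto the lower diagonal'' does not rule this out without a precise analysis of which bidegrees the connecting map hits and whether the relevant short exact sequences split. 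You yourself note that the inductive hypothesis would need to be strengthened here; that strengthening is exactly where the real work lies, and in the proposal it is deferred rather than carried out.
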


Let $\tJ_L(q)=J_L(q)/(q+1/q)$ be a renormalization of $J_L(q)$ that 
equals $1$ on the unknot instead of $q+1/q$.

\begin{thm}[Lee~\cite{Lee-patterns}]\label{thm:lee-pattern}
If $L$ is an H-slim link, then its rational Khovanov homology $\CalH_\Q(L)$ is
completely determined, up to a grading shift, by the Jones polynomial $J_L(q)$
of $L$. In particular, the total rank of $\CalH(L)$ is given by
$\rk\CalH_\Q(L)=|\tJ_L(\sqrt{-1})|+2^{c-1}$, where $c$ is the number of
components of $L$ {\rm (cf.~\cite{Garoufalidis}).}
\end{thm}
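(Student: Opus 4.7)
The plan is to use Lee's deformation of the Khovanov differential. Over $\Q$, Lee's modified differential on $\CalC(D)$ has homology $\CalH_{Lee}(L)$ that is free of rank $2^c$, with canonical generators $\mathfrak{s}_o$ indexed by the $2^c$ orientations of~$L$. Filtering by $q$-grading produces a spectral sequence with $E_1 = \CalH_\Q(L)$ converging to $\CalH_{Lee}(L)$, in which the $r$-th differential has bidegree $(1,4r)$ and hence shifts the diagonal $\delta = 2i-j$ by $2-4r$.

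My key observation is that for an H-slim (and hence $\Q$H-thin, by Proposition~\ref{prop:all-thins}) link~$L$, the support of $\CalH_\Q(L)$ lies on two adjacent diagonals $\delta_\pm$ with $\delta_+ = \delta_- + 2$. For $r \geq 2$ the shift $2-4r \leq -6$ would take any class outside this support, so every such $d_r$ vanishes; the spectral sequence therefore collapses at~$E_2$. Consequently $\CalH_\Q(L)$ decomposes as a direct sum of $2^c$ surviving Lee generators together with some $k$ knight-move pairs $((a,b),(a+1,b+4))$ that are cancelled by~$d_1$, giving $\dim \CalH_\Q(L) = 2k + 2^c$. Using the pairing of canonical Lee classes $\mathfrak{s}_o, \mathfrak{s}_{\bar o}$ under global orientation reversal (which preserves homological grading and shifts $j$ by $\pm 2$), the $2^c$ Lee generators combine into $2^{c-1}$ ``Lee pairs'', each placing one generator on each of the two diagonals.

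Next I would substitute this decomposition into \eqref{eq:khpol-jones}. Each knight-move pair contributes $(-1)^a q^b(1-q^4)$ to $J_L(q)$, and each Lee pair contributes $(-1)^a q^b(1+q^2)$; both are divisible by $q+1/q$. Along the thin diagonals the relation $2a-b = \delta_+$ forces $(-1)^a (\sqrt{-1})^{b+1}$ to equal the constant $(\sqrt{-1})^{1-\delta_+}$, so evaluating the quotient $\tJ_L(q)$ at $q=\sqrt{-1}$ collapses every term to this common phase, contributing absolute value~$2$ per knight-move pair and~$1$ per Lee pair. Therefore $|\tJ_L(\sqrt{-1})| = 2k + 2^{c-1}$, whence $\dim \CalH_\Q(L) = 2k + 2^c = |\tJ_L(\sqrt{-1})| + 2^{c-1}$. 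The determination of $\CalH_\Q(L)$ up to a $j$-grading shift follows similarly: $\tJ_L(q)$ has $q$-exponents all of a fixed parity, pinning down $\delta_+$ modulo~$2$ (the stated ambiguity); once $\delta_+$ is fixed, the coefficients of $J_L$ together with the knight-move/Lee-pair structure recursively recover the individual ranks $h^{i,j}$ by induction on~$j$. The main obstacle I anticipate is verifying the grading pattern of the Lee pairs on the two thin diagonals---namely that each $(\mathfrak{s}_o,\mathfrak{s}_{\bar o})$ pair places one generator on each diagonal at a common homological grading---which relies on Lee's explicit formulas for the canonical classes under orientation reversal.
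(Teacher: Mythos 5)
The paper states Theorem~\ref{thm:lee-pattern} only as a citation of Lee~\cite{Lee-patterns} (cf.~\cite{Garoufalidis}) and offers no proof of its own, so there is no in-paper argument to compare against; your proposal must be judged as a reconstruction of Lee's proof. As such it is essentially correct and follows the standard strategy: filter the Lee deformation by $q$-degree, note that for a $\Q$H-thin link the $E_r$-differential (bidegree $(1,4r)$, shifting $\delta=2i-j$ by $2-4r$) must vanish for $r\ge2$, and read the formula off the resulting decomposition of $\CalH_\Q(L)$ into knight-move pairs plus $E_\infty$. The one spot you flag as the obstacle is indeed where the content lies, but the justification as you phrase it is off: orientation reversal does \emph{not} shift the $q$-filtration degree of the canonical Lee classes by $\pm2$. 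In fact $\mathfrak{s}_o$ and $\mathfrak{s}_{\bar o}$ lie at the \emph{same} filtration degree and the same homological degree; it is the combinations $\mathfrak{s}_o\pm\mathfrak{s}_{\bar o}$ whose filtration degrees differ by~$2$ (this is exactly Rasmussen's $s_{\min}/s_{\max}$ observation, extended to links by Lee). Consequently the two-dimensional span of a conjugate pair contributes exactly one dimension to each of two adjacent filtration levels of $E_\infty$ in a single homological degree, which is the ``Lee pair'' you need. With that correction, the evaluation of $\tJ_L$ at $\sqrt{-1}$ gives $|\tJ_L(\sqrt{-1})|=2k+2^{c-1}$ and hence $\rk\CalH_\Q(L)=2k+2^c=|\tJ_L(\sqrt{-1})|+2^{c-1}$ as you claim, and the determination of $\CalH_\Q(L)$ up to an overall $j$-shift follows as you outline.
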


\begin{cor}
Every oriented non-split alternating link $L$ has its rational Khovanov
homology $\CalH_\Q(L)$ completely determined by $J_L(q)$ and $\Gs(L)$.
\end{cor}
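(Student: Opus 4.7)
The plan is to combine the two theorems of Lee stated immediately above, since together they supply exactly the two pieces of data mentioned in the corollary: the Jones polynomial controls the rational homology up to a grading shift, and the signature pins down the shift.

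First I would invoke Theorem~\ref{thm:lee-thin} to conclude that any oriented non-split alternating link $L$ is H-slim. This places $L$ in the hypothesis class of Theorem~\ref{thm:lee-pattern}, so its rational Khovanov homology $\CalH_\Q(L)$ is determined by $J_L(q)$ up to an overall $(i,j)$-grading shift. In particular, the bigraded Betti numbers $h^{i,j}(L)$ are known as soon as one knows $J_L(q)$ and the position of one of the two supporting diagonals.

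Next I would use the second half of Theorem~\ref{thm:lee-thin}, which specifies that the homology of a non-split alternating link is supported on the two diagonals $2i-j=\Gs(L)\pm 1$. This fixes the grading ambiguity left over from Theorem~\ref{thm:lee-pattern}: once one knows the signature $\Gs(L)$, one knows which diagonals carry the homology, and the distribution of ranks along these two diagonals is then read off from the coefficients of $J_L(q)$ via~\eqref{eq:khpol-jones} (noting that on a thin link each monomial in $J_L(q)$ comes from a unique bidegree on a prescribed diagonal, with signs forced by the alternating $(-1)^i$).

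I do not expect any real obstacle here; the corollary is essentially a bookkeeping consequence of assembling Theorem~\ref{thm:lee-thin} and Theorem~\ref{thm:lee-pattern}. The only mildly delicate point is to check that the two-diagonal support together with the $\pm$-convention in $2i-j=\Gs(L)\pm 1$ really does determine the ranks in each bidegree from the Euler characteristic, but this is immediate from the fact that a thin link has at most one non-trivial group in each $j$-grading on each diagonal, so the signs in~\eqref{eq:khpol-jones} alternate predictably and no cancellation can occur.
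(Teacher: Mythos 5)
Your core argument is correct and matches what the paper evidently intends (the paper itself gives no explicit proof of this corollary): Theorem~\ref{thm:lee-thin} puts $L$ in the H-slim class, Theorem~\ref{thm:lee-pattern} then determines $\CalH_\Q(L)$ from $J_L(q)$ up to a grading shift, and the second half of Theorem~\ref{thm:lee-thin} pins the two supporting diagonals to $2i-j=\Gs(L)\pm1$, resolving the shift.

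However, the final paragraph contains a false claim that you should not rely on. For a fixed $j$-grading the two diagonals $2i-j=\Gs(L)+1$ and $2i-j=\Gs(L)-1$ contribute bidegrees $(i,j)$ and $(i-1,j)$, whose signs $(-1)^i$ and $(-1)^{i-1}$ in~\eqref{eq:khpol-jones} are \emph{opposite}; thus the coefficient of $q^j$ is $\pm\bigl(h^{i,j}-h^{i-1,j}\bigr)$ and cancellation between the two diagonals is generic, contrary to your assertion that ``no cancellation can occur.'' The graded Euler characteristic alone would not recover the individual $h^{i,j}$. This does not harm the corollary, because Theorem~\ref{thm:lee-pattern} is a stronger black box than the Euler-characteristic identity — it rests on Lee's knight-move pairing, which supplies the extra constraint needed to solve for the ranks — but you should strike the Euler-characteristic ``verification'' rather than present it as an independent check.
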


\begin{thm}[\cite{me-torsion}]\label{thm:me-pattern}
If $L$ is an H-slim link, then its integer Khovanov homology $\CalH(L)$ has no
torsion elements of odd order and its Khovanov homology $\CalH_{\Z_2}(L)$ with
coefficients in $\Z_2$ is completely determined, up to a grading shift, by the
Jones polynomial $J_L(q)$ of $L$. In particular, the total dimension of
$\CalH_{\Z_2}(L)$ over $\Z_2$ is given by
$\dim_{\Z_2}(\CalH_{\Z_2}(L))=2|\tJ_L(\sqrt{-1})|$.
\end{thm}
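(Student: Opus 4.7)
The plan is to exploit the restrictive structure imposed by H-slimness together with Lee's spectral sequence (defined over any ring in which $2$ is invertible) and the Universal Coefficient Theorem. Since $L$ is $\Z$H-thin with torsion-free upper diagonal, we may decompose
$$\CalH^{i,j}(L) \cong \Z^{a_{i,j}} \oplus T^{i,j}, \qquad T^{i,j}=0 \text{ when } (i,j) \text{ lies on the upper diagonal.}$$
All torsion therefore lives on a single diagonal, and by the UCT a cyclic torsion summand $\Z_{p^k}$ at $(i,j)$ contributes one $\Z_p$ at $(i,j)$ and one $\Z_p$ at $(i-1,j)$ to $\CalH_{\Z_p}(L)$; the shifted copy is forced onto the upper diagonal, because the two diagonals differ by $2$ in the quantity $2i-j$.

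For the absence of odd torsion, I would run Lee's spectral sequence over $\Z_p$ for an odd prime $p$, which starts at $\CalH_{\Z_p}(L)$ and abuts to a free $\Z_p$-module of rank $2^c$, where $c$ is the number of components of $L$. Because Lee's differentials raise the homological grading and shift the quantum grading by fixed amounts, on a link supported on two adjacent diagonals they can only realize pairings between the two diagonals; the same configuration governs Lee's sequence rationally, and by Theorem~\ref{thm:lee-pattern} the pairing collapses $\dim_\Q\CalH_\Q(L)$ to $2^c$ via $(\dim_\Q\CalH_\Q(L)-2^c)/2$ knight-move pairs. Comparing dimensions yields
$$\dim_{\Z_p}\CalH_{\Z_p}(L) \;=\; 2^c + 2\cdot\frac{\dim_\Q\CalH_\Q(L)-2^c}{2} \;=\; \dim_\Q\CalH_\Q(L),$$
which, together with the UCT count above, forces every $T^{i,j}$ to be free of $p$-power summands.

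With odd torsion ruled out, $T^{i,j}$ consists entirely of cyclic $2$-power summands on the lower diagonal, and each such summand (irrespective of its order) contributes exactly one $\Z_2$ at $(i,j)$ and one $\Z_2$ at $(i-1,j)$ to $\CalH_{\Z_2}(L)$. The number of these summands I would fix by analyzing the integer (or $\Z_{(2)}$) Lee differential on $\CalH(L)$: for an H-slim link, each rational knight-move pair must be realized by a nonzero integer differential divisible by $2$, producing precisely one $2$-power summand of $T$ per pair. Combining these observations,
$$\dim_{\Z_2}\CalH_{\Z_2}(L) \;=\; \dim_\Q\CalH_\Q(L) \;+\; 2\cdot\tfrac{1}{2}\bigl(\dim_\Q\CalH_\Q(L)-2^c\bigr) \;=\; 2\dim_\Q\CalH_\Q(L)-2^c \;=\; 2|\tJ_L(\sqrt{-1})|,$$
using $\dim_\Q\CalH_\Q(L)=|\tJ_L(\sqrt{-1})|+2^{c-1}$ from Theorem~\ref{thm:lee-pattern}. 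The bi-gradings of all generators are dictated by the (rational) knight-move pattern, which is itself determined by $J_L(q)$, proving that $\CalH_{\Z_2}(L)$ is determined by the Jones polynomial.

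The principal obstacle is transferring Lee's spectral sequence between coefficient rings: one must show that the knight-move pairs cancelling Lee's sequence rationally also cancel it over $\Z_p$ for odd $p$, and that each such pair is realized integrally by a differential whose matrix entry is divisible by $2$ rather than by $1$. Controlling these coefficients is what simultaneously rules out odd torsion and accounts for every $2$-power summand of $T$, and it is the technical heart of the argument.
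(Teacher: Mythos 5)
This theorem is cited from~\cite{me-torsion}, not proved in the present paper, so there is no in-paper proof to compare against directly. Judging from the surrounding text, however, the cited proof relies on the differential $\Gn$ of Theorem~\ref{thm:me-diff} (and the Remark following it): $\Gn$ is acyclic on the chain level and produces an isomorphism $\CalH_{\Z_2}(L)\simeq\widetilde\CalH_{\Z_2}(L)\{1\}\oplus\widetilde\CalH_{\Z_2}(L)\{-1\}$ with two shifted copies of the reduced Khovanov homology; for an H-slim (hence $\Z_2$H-thin) link the reduced theory sits on a single diagonal, so its $\Z_2$-dimension equals $|\tJ_L(\sqrt{-1})|$ by the Euler characteristic with no possible cancellation, giving both the formula $\dim_{\Z_2}\CalH_{\Z_2}(L)=2|\tJ_L(\sqrt{-1})|$ and the ``determined by $J_L(q)$'' claim essentially for free. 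Your route via Lee's spectral sequence is genuinely different and does not recover the bigraded determination by $J_L(q)$ as cleanly.

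Beyond this difference in strategy, your argument has two concrete gaps. First, the displayed equality $\dim_{\Z_p}\CalH_{\Z_p}(L)=\dim_\Q\CalH_\Q(L)$ for odd $p$ does not follow from ``comparing dimensions'': the spectral sequence over $\Z_p$ only tells you $\rk\,d_1^{\Z_p}=\tfrac12\bigl(\dim_{\Z_p}\CalH_{\Z_p}(L)-2^c\bigr)$, which leaves $\dim_{\Z_p}\CalH_{\Z_p}(L)$ undetermined unless you separately control $\rk\,d_1^{\Z_p}$. A correct completion would work over $\Z_{(p)}$ (or $\Z[1/2]$): there $E_1$ has its torsion confined to the lower diagonal, $d_1$ maps the lower diagonal into the torsion-free upper diagonal, so the torsion sits in $\ker d_1$ and survives to $E_2=E_\infty$; but $E_\infty$ is the associated graded of free Lee homology, forcing the torsion to vanish. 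That observation, not a dimension count, is what rules out odd torsion. Second, the claim that each rational knight-move pair ``must be realized by a nonzero integer differential divisible by $2$'' is both imprecisely stated and not something you can take as given; the (Bar-Natan/Lee) deformation differential is not $2$-divisible at the chain level, and the appearance of $2$-torsion has to be extracted from the cokernel of $d_1$ over $\Z_{(2)}$, where it is a conclusion rather than an input. As stated, this step would not let you pin down $\dim_{\Z_2}\CalH_{\Z_2}(L)$. You correctly identify coefficient transfer as the crux, but the argument you sketch does not yet close that gap, whereas the $\Gn$-based decomposition sidesteps it entirely.
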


\begin{cor}
Every oriented non-split alternating link $L$ has its integer Khovanov
homology $\CalH(L)$ all but determined by $J_L(q)$ and $\Gs(L)$, except that
one cannot distinguish between $\Z_{2^k}$ factors in the canonical
decomposition of $\CalH(L)$ for different values of $k$.
\end{cor}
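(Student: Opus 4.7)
The plan is to combine Theorems~\ref{thm:lee-thin} and~\ref{thm:me-pattern} with the Universal Coefficient Theorem. First I would invoke Theorem~\ref{thm:lee-thin} to conclude that any non-split alternating $L$ is H-slim and that its Khovanov homology is supported on the two diagonals $j=2i-\Gs(L)\pm1$, with the H-slim hypothesis confining all torsion to the lower diagonal. The corollary following Theorem~\ref{thm:lee-pattern} would then recover $\CalH_\Q(L)$, and in particular the bigraded Betti numbers $h^{i,j}(L)$, from $J_L(q)$ and $\Gs(L)$. Next, I would use Theorem~\ref{thm:me-pattern} for two things: it rules out odd torsion in $\CalH(L)$, so every torsion summand has the form $\Z_{2^k}$; and it determines $\dim_{\Z_2}\CalH_{\Z_2}^{i,j}(L)$ in terms of $J_L(q)$ and $\Gs(L)$ as well.

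The main step would be a bigrading-by-bigrading application of the Universal Coefficient Theorem
\[
\CalH_{\Z_2}^{i,j}(L)\;\cong\;\bigl(\CalH^{i,j}(L)\otimes\Z_2\bigr)\oplus\operatorname{Tor}\bigl(\CalH^{i+1,j}(L),\Z_2\bigr),
\]
letting $t_{i,j}$ denote the number of $\Z_{2^k}$ summands of $\CalH^{i,j}(L)$. The H-slim property forces $t_{i,j}=0$ on the upper diagonal, and two-diagonal support places the bigrading $(i+1,j)$ off both diagonals whenever $(i,j)$ lies on the lower one, so each UCT equation contains at most one unknown torsion count. Consequently I can read off $t_{i,j}=\dim_{\Z_2}\CalH_{\Z_2}^{i,j}(L)-h^{i,j}(L)$ on the lower diagonal, with a consistency check coming from the Tor summand that appears one cohomological degree earlier on the upper diagonal. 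This recovers the total number of $\Z_{2^k}$ factors at each bigrading from $J_L(q)$ and $\Gs(L)$.

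The final observation is that a summand $\Z_{2^k}$ contributes the same $\Z_2$-dimension to both the tensor and Tor parts of the UCT regardless of $k$, so the distribution of exponents is intrinsically inaccessible from $\CalH_{\Z_2}(L)$ alone; this is exactly the ambiguity asserted by the corollary. I do not anticipate any substantive technical obstacle here, since the argument reduces to bookkeeping with UCT: the only thing that requires care is verifying that the diagonal geometry prevents Tor-contributions at different bigradings from overlapping, and that is automatic from the H-slim and two-diagonal hypotheses already in hand.
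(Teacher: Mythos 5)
Your argument is correct and is exactly the (unstated) justification the paper intends: Theorem~\ref{thm:lee-thin} gives H-slimness and pins the two diagonals via $\Gs(L)$, Theorems~\ref{thm:lee-pattern} and~\ref{thm:me-pattern} then determine the bigraded $\Q$- and $\Z_2$-dimensions (and exclude odd torsion), and the bigraded UCT with the diagonal geometry isolates the count of cyclic $2$-power summands in each bigrading while leaving the exponents $k$ undetectable. No substantive deviation from the paper's route.
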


\begin{rem}
\cite{Lee-patterns} and~\cite{me-torsion} contain much more information about
the structure of $\CalH_\Q(L)$ and $\CalH(L)$ of H-slim links than 
Theorems~\ref{thm:lee-pattern} and~\ref{thm:me-pattern}, respectively. But we
have no use of more general statements in this paper.
\end{rem}

It was conjectured in~\cite{me-torsion} that $\CalH(L)$ of H-slim links can
only contain $2$-torsion. Our main result is to prove this conjecture.

\begin{thm}\label{thm:main-result}
Let $L$ be a $\Z_2$H-thin link. Then $\CalH(L)$ contains no torsion elements
of order $2^k$ for $k>1$. 
\end{thm}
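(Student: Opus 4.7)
The plan is to analyze the Bockstein spectral sequence of $\CalC(D)$ in tandem with the Turner differential on $\CalH_{\Z_2}(L)$. Recall that a $\Z_{2^k}$-summand of $\CalH(L)$ with $k\geq 2$ contributes a pair of classes in $\CalH_{\Z_2}(L)$ that are detected precisely by the $k$-th Bockstein differential $\beta_k$ and not by any $\beta_r$ with $r<k$. Thus the theorem is equivalent to the statement that the Bockstein spectral sequence collapses at $E_2$, i.e.\ $\beta_r=0$ for every $r\geq 2$. This is what I would set out to prove.

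The main tool is the Turner differential $\Phi$ on $\CalH_{\Z_2}(L)$, a bigraded endomorphism arising from the $\Z_2$-deformation of the Frobenius algebra underlying $\CalC(D)$, from $\Z_2[x]/(x^2)$ to $\Z_2[x]/(x^2+x)$. Its bidegree differs from that of $\beta_1$, and the total rank of its homology is $2^c$ for a $c$-component link, supported on the Lee-type canonical generators. I would first verify, at the chain level, an algebraic identity relating $\beta_1$ and $\Phi$, most naturally an anticommutation of the form $\beta_1\Phi+\Phi\beta_1=0$ on homology, or at worst one that becomes such after correction by a null-homotopic term. Both operators admit explicit local formulas in terms of the Frobenius structure, and $\beta_1$ can be written out using the short exact sequence $0\to\Z_2\to\Z_4\to\Z_2\to 0$, so the identity is in principle checkable edge by edge on the cube of resolutions.

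Once such an identity is in hand, the $\Z_2$H-thin hypothesis should become very restrictive: since $\CalH_{\Z_2}(L)$ is supported on two adjacent diagonals $2i-j=s\pm 1$, both $\beta_1$ and $\Phi$ can only move between these two diagonals along a unique bidegree arrow. The commutation identity should then allow me to trade higher Bocksteins for iterated applications of $\Phi$; combined with the known total rank $2^c$ of Turner homology and with a rank count of what survives to $E_\infty$ of the Bockstein spectral sequence, this will force $\beta_r\equiv 0$ for all $r\geq 2$. I expect the argument at this stage to be a relatively short bidegree-and-rank chase rather than a deep computation.

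The main obstacle, I expect, will be establishing the algebraic identity itself. The Bockstein $\beta_1$ is defined only up to chain homotopy and requires the choice of an integer lift of a $\Z_2$-chain, whereas $\Phi$ is canonically defined over $\Z_2$, so pairing them requires a careful choice of chain-level representatives in the Frobenius algebra to obtain a clean formula. A secondary difficulty is that the identity may only hold up to a correction term, and one has to verify that this correction is invisible to the spectral-sequence argument above. Once the identity is set up properly, the thinness hypothesis should cut through the remaining combinatorics quickly.
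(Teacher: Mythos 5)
Your overall framework matches the paper's: reduce the theorem via the Bockstein spectral sequence to the statement that it collapses at the second page, then use the Turner differential and the $\Z_2$H-thin hypothesis to force this collapse via a rank count. That part is sound, and you correctly identify the inputs (Theorems~\ref{thm:lee-pattern}, \ref{thm:me-pattern}, \ref{thm:Turner-spectral}).

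However, the key algebraic identity you propose is the wrong one, and I don't see how your version closes the argument. You conjecture an anticommutation $\Gb\circ d^*_T+d^*_T\circ\Gb=0$ on $\CalH_{\Z_2}(L)$. Even granting it (it is in fact a formal consequence of the paper's lemma together with $\Gb^2=0$), this relation only says $\Gb$ descends to the Turner $E_2$-page; it gives no quantitative relation between $\rk_{\Z_2}\Gb$ and $\rk_{\Z_2}d^*_T$, which is exactly what the dimension count needs. The paper instead proves the stronger identity
$d^*_T=\Gb\circ\Gn^*+\Gn^*\circ\Gb$,
where $\Gn^*$ is a \emph{third} operator of bidegree $(0,2)$ on $\CalH_{\Z_2}(L)$ (introduced in~\cite{me-torsion}), which in the thin case restricts to an isomorphism from the lower diagonal onto the upper one. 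Because $\Gb$ has bidegree $(1,0)$ it vanishes on the lower diagonal, and because $d^*_T$ has bidegree $(1,2)$ it preserves diagonals; the identity then pins down $\rk_{\Z_2}(d^*_T|_{\CalH_l})=\rk_{\Z_2}(d^*_T|_{\CalH_u})=\rk_{\Z_2}\Gb$, whence $\rk_{\Z_2}d^*_T=2\rk_{\Z_2}\Gb$. Plugging in $\dim_{\Z_2}E^T_1-\dim_{\Z_2}E^T_2=2|\tJ_L(\sqrt{-1})|-2^c$ gives precisely the value of $\rk_{\Z_2}\Gb$ needed for $\dim_{\Z_2}B_2=\dim_{\Z_2}B_\infty$.

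So the genuine gap is the absence of $\Gn^*$ from your toolkit. Without it you have no mechanism that converts an equality between operators into an equality between ranks, and the proposed ``trade higher Bocksteins for iterated $\Phi$'' step is not a step the paper ever takes: no control of $b_r$ for $r\ge2$ is attempted directly, only $\rk_{\Z_2}b_1$ is computed and compared against $\dim_{\Z_2}B_1-\dim_{\Z_2}B_\infty$. A smaller inaccuracy: you say both $\Gb$ and $d^*_T$ ``move between the two diagonals along a unique bidegree arrow'' --- in fact $d^*_T$ (bidegree $(1,2)$) preserves each diagonal $2i-j=\text{const}$, while $\Gb$ (bidegree $(1,0)$) moves from the upper diagonal to the lower and is identically zero on the lower; this asymmetry is exactly what makes the three-term identity effective.
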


\begin{cor}
If $L$ is an H-slim link, then every non-trivial torsion element of $\CalH(L)$
has order $2$. Consequently, every oriented non-split alternating link $L$ has
its integer Khovanov homology completely determined by $J_L(q)$ and $\Gs(L)$.
\end{cor}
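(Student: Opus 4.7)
The plan is to chain together the results already stated in the introduction, with Theorem~\ref{thm:main-result} supplying the missing ingredient that closes the one ambiguity left open by Theorem~\ref{thm:me-pattern}. First, if $L$ is H-slim, then Proposition~\ref{prop:all-thins} implies that $L$ is $\Z_2$H-thin, so Theorem~\ref{thm:main-result} applies and $\CalH(L)$ has no torsion element of order $2^k$ for $k>1$. Theorem~\ref{thm:me-pattern} independently rules out odd-order torsion in $\CalH(L)$. Combining these two facts forces every non-trivial torsion summand of $\CalH(L)$ to be a copy of $\Z_2$, which is precisely the first assertion of the corollary.

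For the second assertion, let $L$ be an oriented non-split alternating link. Theorem~\ref{thm:lee-thin} gives that $L$ is H-slim with Khovanov homology supported on the diagonals $2i-j=\Gs(L)\pm 1$, so by the first assertion we may write
\begin{equation*}
\CalH^{i,j}(L)\cong\Z^{a_{i,j}}\oplus\Z_2^{b_{i,j}}
\end{equation*}
with $a_{i,j}=b_{i,j}=0$ unless $2i-j\in\{\Gs(L)-1,\Gs(L)+1\}$. Theorem~\ref{thm:lee-pattern} determines the integer Betti numbers $a_{i,j}$ from $J_L(q)$ up to a single global grading shift, and Theorem~\ref{thm:me-pattern} likewise determines the $\Z_2$-Betti numbers $\dim_{\Z_2}\CalH_{\Z_2}^{i,j}(L)$ up to a grading shift. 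The diagonal formula of Theorem~\ref{thm:lee-thin} pins both shifts down simultaneously in terms of $\Gs(L)$.

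It remains to recover the torsion multiplicities $b_{i,j}$. Applying the Universal Coefficient Theorem to Khovanov's cochain complex yields, in each quantum degree $j$,
\begin{equation*}
\dim_{\Z_2}\CalH_{\Z_2}^{i,j}(L)=a_{i,j}+b_{i,j}+b_{i+1,j},
\end{equation*}
the final term recording the $2$-torsion contribution from one higher homological degree. Since the support of $\CalH(L)$ is finite, this linear recursion in $i$ can be solved by downward induction starting from the largest homological degree with non-vanishing homology, expressing each $b_{i,j}$ as an explicit function of quantities already read off from $J_L(q)$ and $\Gs(L)$. The argument is essentially formal once the first assertion is available; the only care needed is to confirm that the grading shifts of Theorems~\ref{thm:lee-pattern} and~\ref{thm:me-pattern} are normalized against the same reference, a role played by Theorem~\ref{thm:lee-thin}.
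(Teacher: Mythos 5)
Your proposal is correct and follows essentially the same route the paper intends: the first assertion is exactly the combination of Proposition~\ref{prop:all-thins}, Theorem~\ref{thm:main-result}, and the odd-torsion exclusion from Theorem~\ref{thm:me-pattern}, and the second assertion is the earlier corollary (``all but determined \dots except for distinguishing $\Z_{2^k}$ factors'') with that ambiguity now removed. Your explicit Universal Coefficient recursion $\dim_{\Z_2}\CalH^{i,j}_{\Z_2}(L)=a_{i,j}+b_{i,j}+b_{i+1,j}$ merely spells out the bookkeeping the paper leaves implicit, and it is carried out correctly.
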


We end this section with a long-standing conjecture from~\cite{me-torsion} 
that provides another motivation for studying $2$-torsion in the Khovanov
homology. Partial results in this directions were obtained 
in~\cite{Asaeda-Przytycki,Jozef-Radmila+Milena-torsion,Jozef-Radmila-torsion}.

\begin{myconj}\label{conj:tors-existence}
Khovanov homology of every non-split link except the trivial knot, the Hopf
link, and their connected sums contains torsion elements of order $2$.
\end{myconj}

This paper is organized as follows. In Section~\ref{sec:ingreds} we
recall main definitions and facts about the Khovanov homology and auxiliary
constructions that are going to be used in the paper.
Section~\ref{sec:Bockstein} contains brief overview of the Bockstein spectral
sequence construction following~\cite{McCleary-Bockstein}
as well as a proof of Theorem~\ref{thm:main-result} modulo technical
Lemma~\ref{lem:diff-commucator} whose proof is postponed until
Section~\ref{sec:lemma-proof}.

\subsection*{Acknowledgements}
The author is grateful to Alain Jeanneret for bringing the Bockstein spectral
sequence to his attention.

\section{Main ingredients and definitions}\label{sec:ingreds}
In this section we give a brief outline of the Khovanov homology theory
following~\cite{Kh-Jones}. We also recall required bits and pieces
from~\cite{Turner-diff} and~\cite{me-torsion}.

\subsection{Algebraic preliminaries}
Let $R$ be a commutative ring with unity. In this paper, we are only
interested in the cases $R=\Z$, $\Q$, or $\Z_2$. If $M$ is a graded
$R$-module, we denote its {\em homogeneous component of degree $j$} by $M_j$.
For an integer $k$, the {\em shifted module} $M\{k\}$ is defined as having
homogeneous components $M\{k\}_j=M_{j-k}$. In the case when $M$ is free and
finite dimensional, we
define its {\em graded dimension} as the Laurent polynomial
$\dim_q(M)=\sum_{j\in\Z}q^j\dim(M_j)$ in variable~$q$. Finally, 
if $(\CalC,d)=\Bigl(\cdots\lra\CalC^{i-1}\stackrel{d^{i{-}1}}{\lra}\CalC^i
\stackrel{d^i}{\lra}\CalC^{i+1}\lra\cdots\Bigr)$ is a (co)chain complex of
graded free $R$-modules such that all differentials $d^i$ are graded of
degree~$0$ with respect to the internal grading, we define its {\sl graded
Euler characteristic} as $\chi_q(\CalC)=\sum_{i\in\Z}(-1)^i\dim_q(\CalC^i)$.

\begin{rem}
One can think of a chain complex of graded $R$-modules as a {\em bigraded
$R$-module} where the homogeneous components are indexed by pairs of numbers
$(i,j)\in\Z^2$. Under this point of view, the differentials are graded of
{\em bidegree} $(1,0)$.
\end{rem}

Let $A=R[X]/X^2$, the ring of truncated polynomials. As an $R$-module, $A$ is
freely generated by $1$ and $X$. We put grading on $A$ by specifying that
$\deg(1)=1$ and $\deg(X)=-1$. In other words, $A\simeq R\{1\}\oplus R\{-1\}$
and $\dim_q(A)=q+q^{-1}$. At the same time, $A$ is a (graded) commutative
algebra with the unit $1$ and multiplication $m:A\otimes A\to A$ given by
\begin{equation}\label{eq:A-mult}
m(1\otimes 1)=1,\qquad m(1\otimes X)=m(X\otimes1)=X,\qquad m(X\otimes X)=0.
\end{equation}
Algebra $A$ can also be equipped with comultiplication $\Delta:A\to A\otimes
A$ defined as
\begin{equation}\label{eq:A-comult}
\GD(1)=1\otimes X+X\otimes 1,\qquad \GD(X)=X\otimes X.
\end{equation}
It follows directly from the definition that $m$ and $\GD$ are graded maps
with
\begin{equation}
\label{eq:cobord-grading}
\deg(m)=\deg(\GD)=-1.
\end{equation}

\begin{rem}
Together with a counit map $\Ge:A\to R$ given by $\Ge(1)=0$ and $\Ge(X)=1$,
$A$ has a structure of a commutative Frobenius algebra over $R$,
see~\cite{Kh-Frobenius}.
\end{rem}

\subsection{Khovanov chain complex}\label{sec:Kh-complex}
Let $L$ be an oriented link and $D$ its planar diagram. We assign a number
$\pm1$, called {\em sign}, to every crossing of $D$ according to the rule
depicted in Figure~\ref{fig:crossing-signs}. The sum of these signs over all
the crossings of $D$ is called the {\em writhe number} of $D$ and is denoted
by $w(D)$.

\begin{figure}
\captionindent 0.35\captionindent
\begin{minipage}{1.8in}
\centerline{\input{Xing_signs.pspdftex}}
\caption{Positive and negative crossings}
\label{fig:crossing-signs}
\end{minipage}
\hfill
\begin{minipage}{3.1in}
\centerline{\input{markers.pspdftex}}
\caption{Positive and negative markers and the corresponding resolutions of a
diagram.}
\label{fig:markers}
\end{minipage}
\end{figure}

Every crossing of $D$ can be {\em resolved} in two different ways according to
a choice of a {\em marker}, which can be either {\em positive} or {\em
negative}, at this crossing (see Figure~\ref{fig:markers}). A collection of
markers chosen at every crossing of a diagram $D$ is called a {\em (Kauffman)
state} of $D$. For a diagram with $n$ crossings, there are, obviously, $2^n$
different states. Denote by $\Gs(s)$ the difference between the numbers of
positive and negative markers in a given state $s$. Define
\begin{equation}\label{eq:state-ij}
i(s)=\frac{w(D)-\Gs(s)}2,\qquad j(s)=\frac{3w(D)-\Gs(s)}2.
\end{equation}
Since both $w(D)$ and $\Gs(s)$ are congruent to $n$ modulo $2$, $i(s)$ and
$j(s)$ are always integer. For a given state $s$, the result of
the resolution of $D$ at each crossing according to $s$ is a family $D_s$ of
disjointly embedded circles. Denote the number of these circles by $|D_s|$.

For each state $s$ of $D$, let $\CalA(s)=A^{\otimes|D_s|}\{j(s)\}$. One
should understand this construction as assigning a copy of algebra $A$ to each
circle from $D_s$, taking the tensor product of all of these copies, and
shifting the grading of the result by $j(s)$. By construction, $\CalA(s)$ is
a graded free $R$-module of graded dimension 
$\dim_q(\CalA(s))=q^{j(s)}(q+q^{-1})^{|D_s|}$. Let
$\CalC^i(D)=\bigoplus_{i(s)=i}\CalA(s)$ for each $i\in\Z$. It is easy to check
(see~\cite{BN-first,Kh-Jones}) that $\chi_q(\CalC(D))=J_L(q)$, that is, the
graded Euler characteristic of $\CalC(D)$ equals the Jones polynomial of the
link $L$.

\begin{figure}
\centerline{\vbox{\halign{#\hfill\cr\hskip-0.5em\input{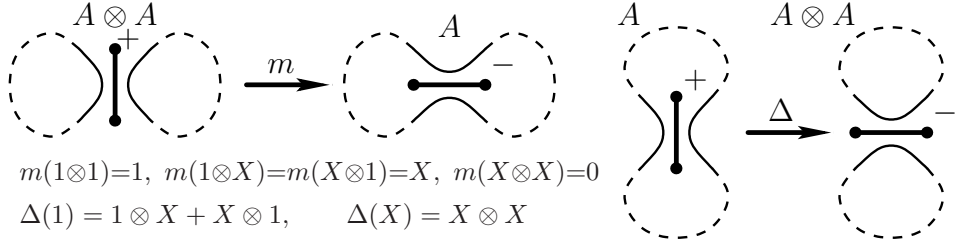}\cr
\noalign{\vskip -1.2\baselineskip}
\hbox{\vbox to0pt{\vss\halign{#\hfill\cr
$m(1{\otimes}1){=}1$,\enspace $m(1{\otimes}X){=}m(X{\otimes}1){=}X$,\enspace
$m(X{\otimes}X){=}0$\cr
\noalign{\medskip}
$\GD(1)=1\otimes X+X\otimes 1,\qquad\GD(X)=X\otimes X$\cr
\noalign{\smallskip}}}}\cr
}}}
\caption{Diagram resolutions corresponding to adjacent states and 
maps between the algebras assigned to the circles}\label{fig:res-change}
\end{figure}

In order to make $\CalC(D)$ into a graded complex, we need to define a
(graded) differential $d^i:\CalC^i(D)\to\CalC^{i+1}(D)$ of degree $0$.
Let $s_+$ and $s_-$ be two states of $D$ that differ at a single
crossing, where $s_+$ has a positive marker while $s_-$ has a negative one.
We call two such states {\em adjacent}. In this case, $\Gs(s_-)=\Gs(s_+)-2$
and, consequently, $i(s_-)=i(s_+)+1$ and $j(s_-)=j(s_+)+1$. Consider now the
resolutions of $D$ corresponding to $s_+$ and $s_-$. One can
readily see that $D_{s_-}$ is obtained from $D_{s_+}$ by either merging two
circles into one or splitting one circle into two (see
Figure~\ref{fig:res-change}). We define $d_{s_+:s_-}:\CalA(s_+)\to\CalA(s_-)$
as either $m$ or $\GD$ depending on whether the circles merge or split.
All the circles that do not pass through the crossing at which $s_+$ and $s_-$
differ, remain unchanged and $d_{s_+:s_-}$ acts as the identity map on the
corresponding copies of $\CalA$. Since $j(s_+)-j(s_-)=1$,
\eqref{eq:cobord-grading} ensures that $\deg(d_{s_+:s_-})=0$.

Finally, let $d^i=\sum_{(s_+,s_-)}\Ge(s_+,s_-)d_{s_+:s_-}$, where
$i(s_+)=i$ and $(s_+,s_-)$ runs over all adjacent pairs of states.
$\Ge(s_+,s_-)=\pm1$ is a sign that can be defined explicitly~\cite{Kh-Jones},
but for our purposes it is enough to know that it only depends on the crossing
at which $s_+$ and $s_-$ differ. It is straightforward to verify that
$d^{i+1}\circ d^i=0$ and, hence, $d:\CalC(D)\to\CalC(D)$ is indeed
a differential.

\begin{defin}[Khovanov,~\cite{Kh-Jones}]\label{def:Khovanov}
The resulting (co)chain complex
$\CalC(D)=\Bigl(\cdots\lra\CalC^{i-1}(D)
\stackrel{d^{i{-}1}}{\lra}\CalC^i(D)
\stackrel{d^i}{\lra}\CalC^{i+1}(D)\lra\cdots\Bigr)$
is called the {\em Khovanov chain complex} of the diagram $D$.
The homology of $\CalC(D)$ with respect to $d$ is called the {\em Khovanov
homology} of $L$ and is denoted by $\CalH(L)$. We write $\CalC_R(D)$ and
$\CalH_R(L)$ if we want to emphasize the ring of coefficients that we work
with. If $R$ is omitted from the notation, integer coefficients are assumed.
\end{defin}

\subsection{Turner spectral sequence}
In the case of $R=\Z_2$, Paul Turner~\cite{Turner-diff} defined another
differential $d_T:\CalC_{\Z_2}(D)\to\CalC_{\Z_2}(D)$ on the Khovanov chain
complex over $\Z_2$. Its definition follows the one above for $d$ almost
verbatim except that the multiplication and comultiplication maps are
different:
\begin{equation}\label{eq:T-diff}
\begin{gathered}
m_T(1\otimes 1)=m_T(1\otimes X)=m_T(X\otimes1)=0,\qquad m_T(X\otimes X)=X;\\
\GD_T(1)=1\otimes 1,\qquad \GD_T(X)=0.
\end{gathered}
\end{equation}

Main properties of $d_T$ are summarized in the theorem below.

\begin{thm}[Turner~\cite{Turner-diff}]\label{thm:Turner-spectral}
Let $L$ be a link with $c$ components represented by a diagram $D$. Then
\begin{enumerate}
\item $d_T$ is a differential on $\CalC_{\Z_2}(D)$ of bidegree $(1,2)$;
\item $d_T$ commutes with $d$ and induces a well-defined differential $d_T^*$
of bidegree $(1,2)$ on the Khovanov homology groups $\CalH_{\Z_2}(L)$;
\item $(\CalC_{\Z_2}(D),d,d_T)$ has a structure of a double complex and,
hence, results in a spectral sequence $\{(E^T_r,d_r)\}_{r\ge0}$ with
$(E^T_0,d_0)=(\CalC_{\Z_2}(D),d)$ and $(E^T_1,d_1)=(\CalH_{\Z_2}(L),d_T^*)$,
where the bidegree of $d_r$ equals $(1,2r)$;
\item $\{(E^T_r,d_r)\}$ converges to $E^T_\infty$ that has the total dimension
over $\Z_2$ equal $2^c$;
\item if $L$ is $\Z_2$H-thin, then $\{(E^T_r,d_r)\}$ collapses at
the second page, that is, $d_r=0$ for $r\ge2$ and $E^T_2=E^T_\infty$.
\end{enumerate}
\end{thm}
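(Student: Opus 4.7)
My plan is to build $d_T$ by transplanting the cube-of-resolutions construction of Section~\ref{sec:Kh-complex} onto the Turner pair $(m_T,\Delta_T)$ from~\eqref{eq:T-diff}, and then extract the spectral sequence from a $j$-grading filtration of $\CalC_{\Z_2}(D)$ equipped with the perturbed differential $\partial=d+d_T$. For~(1), the bidegree is computed locally on a single cube edge $s_+\to s_-$: this edge raises homological degree by~$1$ and the internal grading by~$1$ through the shift $\{j(s)\}$, while~\eqref{eq:T-diff} shows $m_T$ and $\Delta_T$ are each homogeneous of degree $+1$ (since $\deg(X\otimes X)=-2$ and $\deg(1\otimes 1)=2$), summing to bidegree $(1,2)$. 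The identities $d_T^2=0$ and, in characteristic~$2$, $dd_T=d_Td$ reduce to finitely many algebraic relations per cube face, analogous to Khovanov's original $d^2=0$ check. A clean way to organize them is to note that $(m+m_T,\Delta+\Delta_T)$ realizes the Bar--Natan Frobenius algebra $\Z_2[X]/(X^2{-}X)$ with $\GD(1)=1\otimes 1+1\otimes X+X\otimes 1$, so the corresponding cube construction produces a genuine total differential $\partial$ with $\partial^2=0$; combined with $d^2=0$ and a direct face-wise check of $d_T^2=0$, this forces $dd_T+d_Td=0$ and lets $d_T$ descend to $d_T^*$ on $\CalH_{\Z_2}(L)$, giving~(2).

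For (3)--(4), filter $\CalC_{\Z_2}(D)$ decreasingly by $F^p=\bigoplus_{j\ge p}\CalC_{\Z_2}(D)_j$ and take $\partial=d+d_T$ as the total differential. Since $d$ preserves $F^\bullet$ strictly and $d_T$ raises it by~$2$, the standard spectral sequence of a filtered complex yields $E_0=\CalC_{\Z_2}(D)$ with $d_0=d$, hence $E_1=\CalH_{\Z_2}(L)$ with $d_1=d_T^*$; matching the filtration step of two against the bidegrees of $d$ and $d_T$ gives $\deg d_r=(1,2r)$. The abutment is the associated graded of $H(\CalC_{\Z_2}(D),\partial)$, which I would compute by exploiting that the deformed algebra splits, via the orthogonal idempotents $X$ and $1+X$, into a product of two copies of $\Z_2$: this lets Lee's canonical-orientation argument carry over to $\Z_2$ coefficients and produces a basis of $H(\partial)$ indexed by the $2^c$ orientations of~$L$, proving~(4).

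For~(5), assume $L$ is $\Z_2$H-thin, so $E_1=\CalH_{\Z_2}(L)$ is concentrated on two diagonals $2i-j=s\pm 1$. A $d_r$ of bidegree $(1,2r)$ shifts $2i-j$ by $2-2r$, so for $r\ge 3$ the shift is $\le-4$ and $d_r$ maps off the support, vanishing for grading reasons alone. The main obstacle is $d_2$ (shift~$-2$), which could send $2i-j=s+1$ into $2i-j=s-1$; note that $d_1$ has bidegree $(1,2)$ and so shifts $2i-j$ by~$0$, preserving each diagonal, so $E_2$ still sits on the two diagonals. To close the argument I would compare total dimensions per diagonal against the count in~(4): the $2^c$ canonical generators of $E_\infty$ distribute in a forced way between the two diagonals, and matching this with the dimensions of $E_2$ on each diagonal---which are already pinned down by~$d_1$---leaves no slack on the upper diagonal for $d_2$ to kill, forcing $d_2=0$ and $E_2=E_\infty$.
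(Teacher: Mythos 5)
The paper does not give a proof of Theorem~\ref{thm:Turner-spectral}; it is cited verbatim from Turner~\cite{Turner-diff}, so there is no in-paper argument to compare against. Judged on its own, your sketch of parts (1)--(4) follows the standard route and the key identifications are correct: the bidegree computation is right, $(m+m_T,\Delta+\Delta_T)$ is indeed the characteristic-two Bar--Natan pair on $\Z_2[X]/(X^2{-}X)$, so $\partial=d+d_T$ squares to zero and (once $d_T^2=0$ is checked face-wise) forces $dd_T=d_Td$ over $\Z_2$; the $j$-filtration then gives the spectral sequence with the stated pages and bidegrees after the necessary re-indexing, and the orthogonal idempotents $X,\,1+X$ give the Lee-style basis of $E_\infty$ of size $2^c$.

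The genuine gap is in part (5). Your grading argument correctly kills $d_r$ for $r\ge 3$ and correctly isolates $d_2$ (shift $-2$ in $2i-j$) as the only remaining obstruction, but the dimension-count you propose to dispose of $d_2$ is circular as written. You claim the dimensions of $E_2$ on each diagonal are ``already pinned down by $d_1$,'' but $\dim E_2$ on a diagonal equals $\dim E_1$ on that diagonal minus $2\rk(d_1)$ there, and $\rk(d_1)=\rk(d_T^*)$ is not known a priori --- in fact, in the proof of Theorem~\ref{thm:main-result} the rank of $d_T^*$ is \emph{deduced from} statement (5), not used to prove it. Similarly, knowing that $E_\infty$ splits $2^{c-1}$ on each diagonal does not by itself bound $\dim E_2$ from above, because both diagonals of $E_2$ exceed their $E_\infty$ counterparts by exactly $\rk(d_2)$, which is the quantity in question. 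Closing (5) requires some independent input about where the $2^c$ canonical Bar--Natan cycles sit at the \emph{chain} level and how they interact with the filtration (this is where Turner's actual argument does its work); the appeal to ``no slack'' as stated does not supply it.
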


\begin{defin}
The spectral sequence from Theorem~\ref{thm:Turner-spectral} is called the
{\em Turner spectral sequence} for a link $L$.
\end{defin}

\subsection{Generators of $\CalC(D)$ and another differential on
$\CalH_{\Z_2}(L)$}\label{sec:generators}
Let $L$ be a link represented by a diagram $D$ and let $s$ be its Kauffman
state. Then $\CalA(s)$ is freely generated as an $R$-module by $2^{|D_s|}$
generators of the form $a_1\otimes a_2\otimes\cdots\otimes a_{|D_s|}$, where
each $a_i$ is either $1$ or $X$.

Define an $R$-linear homomorphism $\Gn_s:\CalA(s)\to\CalA(s)$ of degree $2$ by
specifying that on a generator of $\CalA(s)$ it equals the sum of all the
possibilities to replace an $X$ in this generator with $1$. For example, if
$|D_s|=3$, then $\Gn_s(X\otimes X\otimes 1)=
1\otimes X\otimes 1+X\otimes 1\otimes 1$,
$\Gn_s(1\otimes X\otimes 1)=1\otimes1\otimes1$ and 
$\Gn_s(1\otimes 1\otimes 1)=0$.
We extend this to a map $\Gn:\CalC(D)\to\CalC(D)$ of bidegree $(0,2)$ as
$\Gn=\sum_s\Gn_s$.

Restrict now our attention to the case of $R=\Z_2$. Main properties of
$\Gn$ over $\Z_2$ are listed below. They are proved in~\cite{me-torsion}.

\begin{thm}[\cite{me-torsion}]\label{thm:me-diff}
Let $L$ be a link and $D$ its diagram. Then
\begin{enumerate}
\item $\Gn$ is a differential on $\CalC_{\Z_2}(D)$ of bidegree $(0,2)$;
\item $\Gn$ commutes with $d$ and, hence, induces a differential $\Gn^*$
of bidegree $(0,2)$ on $\CalH_{\Z_2}(L)$;
\item both $\Gn$ and $\Gn^*$ are acyclic;
\item if $L$ is $\Z_2$H-thin, then $\Gn^*$ establishes an isomorphism between
the two non-trivial diagonals of $\CalH_{\Z_2}(L)$.
\end{enumerate}
\end{thm}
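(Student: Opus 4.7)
The plan is to verify the four parts in order; each ultimately reduces to an algebraic computation in the Frobenius algebra $A = \Z_2[X]/X^2$ and its tensor powers.

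For part (1), on a single copy of $A$ the map $\Gn_A$ sends $X\mapsto 1$ and $1\mapsto 0$, so $\Gn_A^2 = 0$ trivially. On $A^{\otimes n}$ one has $\Gn_s = \sum_{i=1}^{n}\mathrm{id}^{\otimes(i-1)}\otimes\Gn_A\otimes\mathrm{id}^{\otimes(n-i)}$, and $\Gn_s^2$ splits into diagonal contributions (each zero by $\Gn_A^2=0$) and off-diagonal contributions at positions $(i,j)$ and $(j,i)$, which are equal because the factor-wise maps commute and therefore cancel in characteristic $2$. The bidegree $(0,2)$ is forced by $\deg(X) = -1$ and $\deg(1) = 1$.

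For part (2), the commutation $\Gn d = d\Gn$ reduces to $\Gn m = m\Gn$ and $\Gn\GD = \GD\Gn$ on the one or two factors of $A$ involved in a merge or split, together with the observation that $\Gn$ acts as the identity on circles not affected by the given crossing change. Each identity is verified by direct evaluation on the four generators $1\otimes 1$, $1\otimes X$, $X\otimes 1$, $X\otimes X$; two comparisons, such as $m\Gn(X\otimes X) = 2X = 0$ versus $\Gn m(X\otimes X) = \Gn(0) = 0$, depend essentially on characteristic $2$. Once $\Gn$ and $d$ commute, $\Gn$ descends to a map $\Gn^*$ on $\CalH_{\Z_2}(L)$ of the same bidegree.

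For part (3), the complex $(\CalC_{\Z_2}(D),\Gn)$ splits as a direct sum $\bigoplus_s\CalA(s)$, and each summand is, up to grading shift, the tensor product complex $(A,\Gn_A)^{\otimes|D_s|}$. Since $\ker\Gn_A = \mathrm{im}\,\Gn_A = \langle 1\rangle$, the complex $(A,\Gn_A)$ is acyclic, and Künneth over the field $\Z_2$ then yields acyclicity of each $(\CalA(s),\Gn_s)$ and hence of $(\CalC_{\Z_2}(D),\Gn)$. To transport this to $\Gn^*$, I would treat $(\CalC_{\Z_2}(D),d,\Gn)$ as a bicomplex with two commuting differentials and compare the two associated spectral sequences converging to $H_*(\mathrm{Tot},d+\Gn)$: filtering by $\Gn$ first kills the $E_1$-page, so the total homology vanishes, and the other filtration then forces $H(\CalH_{\Z_2}(L),\Gn^*) = 0$. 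The main technical obstacle is controlling the convergence of this second spectral sequence tightly enough to conclude vanishing at the page on which $\Gn^*$ appears rather than merely at $E_\infty$; the fallback is to build an explicit $d$-equivariant chain contraction for $\Gn$ from the local homotopy $k_A(1)=X$, $k_A(X)=0$ placed on a distinguished circle of each Kauffman state, with the distinguished circles chosen compatibly with merges and splits.

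For part (4), $\Z_2$H-thinness means $\CalH_{\Z_2}(L)$ is supported on two diagonals $2i - j = c\pm 1$. Since $\Gn^*$ has bidegree $(0,2)$, it decreases $2i - j$ by $2$; hence it sends the lower diagonal $2i - j = c+1$ into the upper diagonal $2i - j = c-1$ and annihilates the upper diagonal entirely. Writing $\CalH_{\Z_2}(L) = \mathrm{lower}\oplus\mathrm{upper}$, acyclicity $\ker\Gn^* = \mathrm{im}\,\Gn^*$ from part (3) expands as $\ker(\Gn^*|_{\mathrm{lower}})\oplus\mathrm{upper} = 0\oplus\Gn^*(\mathrm{lower})$, which forces $\Gn^*|_{\mathrm{lower}}$ to be both injective and surjective onto $\mathrm{upper}$, i.e.\ an isomorphism between the two diagonals.
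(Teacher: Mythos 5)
The paper does not actually prove this theorem; it is cited to~\cite{me-torsion} and quoted without proof, so there is no in-paper argument to compare against. Taken on its own terms your proposal is essentially correct, and parts (1), (2), and (4) are the straightforward direct computations (the verification of $\Gn m=m\Gn$ and $\Gn\GD=\GD\Gn$ is exactly what is needed, and the use of characteristic $2$ at $m\Gn(X\otimes X)=2X$ and $\Gn\GD(1)=2(1\otimes 1)$ is the crucial point). The one place you should not treat as optional is the fallback in part (3): the bicomplex/spectral-sequence argument, as you correctly observe, only yields $E_\infty=0$, and that by itself does not force $E_2=H(\CalH_{\Z_2}(L),\Gn^*)$ to vanish --- the bicomplex has as many ``rows'' as $j$-gradings, so later differentials could in principle carry the burden. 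The right argument is precisely the $d$-equivariant contraction you sketch. Concretely: fix a basepoint $p$ on an arc of $D$ away from all crossings; for each state $s$ let the distinguished circle be the one through $p$, and set $k_s=\mathrm{id}\otimes\cdots\otimes k_A\otimes\cdots\otimes\mathrm{id}$ with $k_A(1)=X$, $k_A(X)=0$ on that factor. Then $k_s\Gn_s+\Gn_s k_s=\mathrm{id}$ (the cross terms with $i\ne d$ cancel in characteristic $2$), and $k$ commutes with the Khovanov differential because the basepoint circle is carried coherently under merges and splits --- one checks $m(k_A\otimes\mathrm{id})=k_A m$ and $(k_A\otimes\mathrm{id})\GD=\GD k_A$ on the four generators, plus the trivial commutation on factors not touched by the crossing. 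Passing to homology gives $k^*\Gn^*+\Gn^*k^*=\mathrm{id}$ on $\CalH_{\Z_2}(L)$, hence $\Gn^*$ is acyclic, which is what part (4) then exploits. So the proposal is right, but the contraction should be promoted from a ``fallback'' to the main argument for (3); the spectral-sequence route alone leaves a genuine gap.
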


\begin{rem}
$\Gn^*$ helps to establish the fact that $\CalH_{\Z_2}(L)$ is isomorphic to a
direct sum of two copies of the reduced Khovanov homology of $L$ over
$\Z_2$, see~\cite{me-torsion}.
\end{rem}

\def\stmodp{\mkern -12mu\mod p}
\section{Relations between differentials on Khovanov homology over $\Z_2$}
\label{sec:Bockstein}
\subsection{Bockstein spectral sequence}
We start this sections by recalling main definitions and properties of the
Bockstein spectral sequence following Chapter~10
of~\cite{McCleary-Bockstein}. Our setup differs slightly from the one
in~\cite{McCleary-Bockstein} since we consider abstract chain complexes,
while \cite{McCleary-Bockstein} deals with singular homology of topological
spaces. Nonetheless, all the relevant statements still hold true as long as we
work with finitely generated complexes of free Abelian groups. We state all
the results for an arbitrary prime $p$, but will only need them for $p=2$.

Let
$\CalC=\Bigl(\cdots\lra\CalC^{i-1}\lra\CalC^i\lra\CalC^{i+1}\lra\cdots\Bigr)$
be a (co)chain complex of free Abelian groups and let $p$ be a prime number.
A short exact sequence \hbox{$0\lra\Z_p\stackrel{\times p}{\vlra}\Z_{p^2}
\stackrel{\stmodp}{\vlra}\Z_p\lra0$} of coefficient rings induces a
short exact sequence \hbox{$0\lra\CalC\otimes\Z_p\stackrel{\times p}{\vlra}
\CalC\otimes\Z_{p^2}\stackrel{\stmodp}{\vlra}\CalC\otimes\Z_p\lra0$}
of complexes that, in turn, results in a long exact sequence of the
corresponding (co)homology groups. Let $\Gb:H(\CalC;\Z_p)\to H(\CalC;\Z_p)$ be
a connecting homomorphism in this long exact sequence. It has homological
degree $1$. We call $\Gb$ the {\em Bockstein homomorphism} on $H(\CalC;\Z_p)$.

\begin{thm}[cf. Theorem~10.3 and Proposition~10.4 of \cite{McCleary-Bockstein}]
\label{thm:Bockstein}
Let $\CalC$ be a finitely generated (co)chain complex of free Abelian groups.
Then there exists a singly-graded spectral sequence 
$\{(B_r,b_r)\}_{r\ge1}$ with $(B_1,b_1)=(H(\CalC;\Z_p),\Gb)$ that converges to
$(H(\CalC)/\hbox{\rm torsion})\otimes\Z_p$. Moreover, $B_r\simeq\Im\bigl(
H(\CalC;\Z_{p^r})\stackrel{\times p^{r-1}}{\vlra}H(\CalC;\Z_{p^r})\bigr)$, a
graded subgroup of $H(\CalC;\Z_{p^r})$, and $b_r$ can be identified with the
connecting homomorphism induced on $H(\CalC;\Z_{p^r})$ from a short exact
sequence of coefficient rings
\hbox{$0\to\Z_{p^r}\lra\Z_{p^{2r}}\lra\Z_{p^r}\to0$}.
Also, $\deg(b_r)=1$ for every $r\ge1$.
\end{thm}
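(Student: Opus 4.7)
The plan is to construct the spectral sequence from the exact couple associated with the short exact sequence
\[0\lra\CalC\stackrel{\times p}{\vlra}\CalC\lra\CalC\otimes\Z_p\lra0\]
of chain complexes, and then to identify the pages and differentials by a direct chain-level computation. Applying cohomology yields the standard long exact sequence, which repackages as an exact couple with $D=H(\CalC)$, $E=H(\CalC;\Z_p)$, $i=\times p\colon D\to D$, $j\colon D\to E$ reduction modulo $p$, and $k\colon E\to D$ the connecting homomorphism. By construction the mod-$p$ Bockstein equals $j\circ k$, so the associated derived spectral sequence starts with $(B_1,b_1)=(H(\CalC;\Z_p),\Gb)$. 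Since $i,j,k$ have cohomological degrees $0,0,1$ respectively, every $b_r=j_r\circ k_r$ in the derived couples has $\deg(b_r)=1$, giving the last assertion.

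Next, I would unwind the $r$-th derived exact couple to obtain the explicit form of $B_r$. A class $[x]\in H(\CalC;\Z_p)$ survives to the $r$-th page precisely when it admits a $d$-closed lift $\tilde x\in\CalC\otimes\Z_{p^r}$; in that case $b_r[x]=[p^{-r}d\hat x\bmod p]$, where $\hat x\in\CalC$ is any integer lift of $\tilde x$. Multiplication by $p^{r-1}$ identifies the group of such lifts with the subgroup $p^{r-1}H(\CalC;\Z_{p^r})\subset H(\CalC;\Z_{p^r})$, which is exactly $\Im\bigl(H(\CalC;\Z_{p^r})\stackrel{\times p^{r-1}}{\vlra}H(\CalC;\Z_{p^r})\bigr)$. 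Under this identification the formula above for $b_r$ is precisely the connecting homomorphism for the short exact sequence $0\to\Z_{p^r}\to\Z_{p^{2r}}\to\Z_{p^r}\to0$, yielding both of the remaining claims.

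For convergence, finite generation of $\CalC$ implies $H(\CalC)\simeq\Z^f\oplus T$ for a finite abelian group $T$. Once $r$ exceeds the $p$-exponent of $T$, the subgroup $p^{r-1}H(\CalC)$ consists solely of $p^{r-1}$-multiples of free generators, whence $B_r$ stabilizes at the image of $\Z^f\otimes\Z_p=(H(\CalC)/\mathrm{torsion})\otimes\Z_p$ inside $H(\CalC;\Z_p)$. The main technical obstacle in the plan is the chain-level identification in the second paragraph---namely, checking that the abstract derived-couple description of $B_r$ and $b_r$ really matches the image formulation and the $\Z_{p^{2r}}$-Bockstein stated in the theorem. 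The rest is formal bookkeeping once the exact couple is in place, and in fact this is the content of Theorem~10.3 and Proposition~10.4 of~\cite{McCleary-Bockstein}, which we are content to invoke.
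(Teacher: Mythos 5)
The paper does not actually prove Theorem~\ref{thm:Bockstein}; it quotes it directly from Theorem~10.3 and Proposition~10.4 of~\cite{McCleary-Bockstein} with only a remark that the singular-homology setup there carries over to finitely generated free chain complexes. Your sketch of the exact-couple construction is consistent with McCleary's treatment, and since you likewise defer the chain-level identification of $B_r$ and $b_r$ to that reference, your proposal follows essentially the same route as the paper.
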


\begin{defin}
$\{(B_r,b_r)\}$ is called the {\em Bockstein spectral sequence} of
$H(\CalC;\Z_p)$ and $b_r$ is called the {\em $r$-th order Bockstein
homomorphism}.
\end{defin}

\begin{cor}\label{cor:Bock-collapse}
The Bockstein spectral sequence $\{(B_r,b_r)\}$ collapses at the $r$-th page
if and only if $H(\CalC)$ has no torsion elements of order $2^k$ for $k\ge r$.
\end{cor}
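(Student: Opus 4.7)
My plan is to use the explicit description of the pages provided by Theorem~\ref{thm:Bockstein}, namely $B_r\simeq\Im\bigl(H(\CalC;\Z_{2^r})\xrightarrow{\times 2^{r-1}}H(\CalC;\Z_{2^r})\bigr)$, together with the structure theorem for finitely generated Abelian groups, to reduce the statement to a summand-by-summand analysis. Since $H(-;\Z_{2^r})$ commutes with direct sums and the Bockstein spectral sequence is natural in $\CalC$, it is enough to compute the contribution to $(B_r,b_r)$ of each summand in the decomposition $H(\CalC)\simeq\Z^a\oplus(\text{odd torsion})\oplus\bigoplus_{k\ge 1}(\Z_{2^k})^{n_k}$.

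First I would dispose of the easy summands. Odd-order torsion contributes nothing to $H(\CalC;\Z_{2^r})$ by the universal coefficient theorem. Each free $\Z$ summand of $H^i(\CalC)$ contributes a $\Z_{2^r}$ to $H^i(\CalC;\Z_{2^r})$ whose image under $\times 2^{r-1}$ is a $\Z_2$; since this class lifts to an integer cocycle, it is a permanent cycle for every $b_s$ and persists to $B_\infty$. Hence free summands never obstruct the collapse of the spectral sequence on any page.

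The central case is a single $\Z_{2^k}$ summand of $H^i(\CalC)$, generated by a cocycle $c\in\CalC^i$ with $2^k c=db$ for some $b\in\CalC^{i-1}$. By the universal coefficient theorem this summand contributes a $\Z_{2^{\min(k,r)}}$ to $H^i(\CalC;\Z_{2^r})$ (the tensor part, represented by $c$) and a $\Z_{2^{\min(k,r)}}$ to $H^{i-1}(\CalC;\Z_{2^r})$ (the $\mathrm{Tor}$ part, represented by $b$ reduced mod $2^r$). Multiplication by $2^{r-1}$ vanishes on both when $r>k$ and surjects onto a $\Z_2$ in each when $r\le k$, so the only generators of $B_r$ coming from this summand are $[2^{r-1}c]$ and $[2^{r-1}b]$ for $r\le k$. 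The key technical step is to trace the connecting homomorphism of $0\to\Z_{2^s}\to\Z_{2^{2s}}\to\Z_{2^s}\to 0$ on these explicit representatives: the tensor-part class is killed by every $b_s$ since $c$ is an integer cocycle, while lifting $2^{s-1}b$ to $\Z_{2^{2s}}$ and applying the connecting map yields $b_s[2^{s-1}b]=[2^{k-1}c]$ in $H^i(\CalC;\Z_{2^s})$, which vanishes for $s<k$ and equals the tensor-part generator for $s=k$. This parallels the classical Bockstein computation on a mod-$2$ Moore complex and is the principal obstacle.

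Putting the pieces together, each $\Z_{2^k}$ summand contributes a cancelling pair of $\Z_2$'s on page $k$ and nothing from page $k+1$ onward, whereas free summands contribute permanent cycles. Therefore $b_s=0$ for all $s\ge r$ if and only if no $\Z_{2^k}$ with $k\ge r$ appears in $H(\CalC)$, which is precisely the statement of the corollary.
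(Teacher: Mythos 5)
Your argument is correct, but it takes a noticeably longer and more computational route than the paper. The paper's proof is a pure dimension count: a spectral sequence collapses at page $r$ if and only if $\dim_{\Z_p}B_r=\dim_{\Z_p}B_\infty$ (since the $B_s$ are successive subquotients, dimensions only decrease). It then reads $\dim_{\Z_p}B_r$ off from the identification $B_r\simeq\Im\bigl(\times p^{r-1}\colon H(\CalC;\Z_{p^r})\to H(\CalC;\Z_{p^r})\bigr)$ as the number of $\Z_{p^r}$-cyclic summands in $H(\CalC;\Z_{p^r})$, reads $\dim_{\Z_p}B_\infty$ off as $\rk H(\CalC)$, and invokes the Universal Coefficient Theorem once: each $\Z$-summand of $H(\CalC)$ contributes one $\Z_{p^r}$, each $\Z_{p^k}$-summand with $k\ge r$ contributes exactly two (tensor and Tor), and everything else contributes none. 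In contrast, you go on to compute the differentials $b_s$ themselves on explicit cocycle representatives, showing \emph{which} classes cancel and on \emph{which} page — in effect reproving part of the structure of the Bockstein spectral sequence for elementary complexes. That extra information is not needed for the corollary, but it does buy a more concrete picture of where each torsion summand lives in the spectral sequence.

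One step you should tighten: you claim that naturality of $H(-;\Z_{2^r})$ in $\CalC$ together with the decomposition of $H(\CalC)$ suffices to reduce to a summand-by-summand computation. A direct-sum decomposition of $H(\CalC)$ does not by itself decompose $\CalC$ or the associated spectral sequence. The correct justification is that over $\Z$ a bounded complex of finitely generated free abelian groups is chain-homotopy equivalent to a direct sum of elementary complexes $(\Z\to 0)$ and $(\Z\xrightarrow{\,n\,}\Z)$, that the Bockstein spectral sequence is a homotopy invariant, and that it is additive under direct sums of complexes. The paper's dimension-count argument sidesteps this point entirely, since it never needs to split anything at the chain level.
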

\begin{proof}
$\{(B_r,b_r)\}$ collapses at the $r$-th page if and only if
$\dim_{\Z_p}B_r=\dim_{\Z_p}B_\infty$. By Theorem~\ref{thm:Bockstein},
$\dim_{\Z_p}B_r$ equals the number of $\Z_{p^r}$ factors in $H(\CalC;\Z_{p^r})$
and $\dim_{\Z_p}B_\infty$ equals the number of $\Z$ factors in $H(\CalC)$.
The rest follows from the Universal Coefficient Theorem.
\end{proof}

\begin{attn}\label{attn:beta-formula}
The Bockstein differential $\Gb$ has another description that is more
useful for our purposes. Namely, if $b:H(\CalC;\Z_p)\to H(\CalC)$ is
the connecting homomorphisms induced from a short exact sequence of
coefficients
$0\to\Z\stackrel{\times p}{\vlra}\Z\stackrel{\stmodp}{\vlra}\Z_p\to0$,
then $\Gb=b\mod p$.
\end{attn}

\subsection{Proof of the main result} 
Let $p=2$ from now on.
\begin{lem}\label{lem:diff-commucator}
Let $L$ be a link. Then $d^*_T=\Gb\circ\Gn^*+\Gn^*\circ\Gb$ on
$\CalH_{\Z_2}(L)$.
\end{lem}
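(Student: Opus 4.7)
The plan is to work at the chain level and use the alternative description of $\Gb$ from Attention~\ref{attn:beta-formula}: for a cycle $x\in\CalC_{\Z_2}(D)$, choose any integer lift $\tilde x\in\CalC_{\Z}(D)$; since $d\tilde x\in 2\CalC_\Z(D)$, one has $\Gb([x])=[\tfrac12 d\tilde x\bmod 2]$. Because $\Gn$ is defined over $\Z$ and is $\Z$-linear, $\Gn(\tilde x)$ is an integer lift of $\Gn(x)$ and $\tfrac12 d\tilde x$ is an integer lift of a representative of $\Gb[x]$, so
\[
(\Gb\circ\Gn^*)[x]=\bigl[\tfrac12 d\Gn(\tilde x)\bmod 2\bigr],\qquad (\Gn^*\circ\Gb)[x]=\bigl[\tfrac12\Gn(d\tilde x)\bmod 2\bigr].
\]
Since addition and subtraction agree over $\Z_2$, the lemma reduces to the chain-level identity
\[
\tfrac12(d\Gn-\Gn d)\equiv d_T\pmod 2\qquad\text{as $\Z$-linear maps }\CalC_\Z(D)\to\CalC_\Z(D).
\]

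I would then verify this identity by local computation. Because $\Gn$ is a derivation on each $\CalA(s)=A^{\otimes|D_s|}$ (with $\Gn(1)=0$, $\Gn(X)=1$) and $d$ is a signed sum of edge maps $\Ge(s_+,s_-)\,d_{s_+{:}s_-}$, each being either $m$ or $\GD$ tensored with identities on the untouched circles, the commutator $d\Gn-\Gn d$ splits as a signed sum of local commutators indexed by adjacent state pairs. A direct check on the basis $\{1,X\}$ of $A$ and $\{1\otimes 1,\,1\otimes X,\,X\otimes 1,\,X\otimes X\}$ of $A\otimes A$ yields, over $\Z$,
\[
m\circ\Gn-\Gn\circ m=2\,m_T,\qquad \GD\circ\Gn-\Gn\circ\GD=-2\,\GD_T,
\]
where $\Gn$ is interpreted as the appropriate derivation $\Gn\otimes 1+1\otimes\Gn$ on the (co)domain. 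Re-assembling with the common signs $\Ge(s_+,s_-)$ shared by $d$ and $d_T$ gives $d\Gn-\Gn d=2\,d_T^{\mathrm m}-2\,d_T^{\mathrm s}$, where $d_T^{\mathrm m}$ and $d_T^{\mathrm s}$ are the merging and splitting parts of $d_T$; since $d_T^{\mathrm m}-d_T^{\mathrm s}\equiv d_T^{\mathrm m}+d_T^{\mathrm s}=d_T\pmod 2$, the displayed identity follows, and substituting back into the Bockstein computation completes the proof.

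The main obstacle will be the sign mismatch between the merging and splitting local commutators: over $\Z$ one actually has $\tfrac12(d\Gn-\Gn d)=d_T-2d_T^{\mathrm s}$, so the promised identity holds only after reducing modulo~$2$, with the discrepancy $-2d_T^{\mathrm s}$ killed by the reduction. Keeping the sign conventions for $\Ge(s_+,s_-)$, for the tensor-derivation extension of $\Gn$, and for the chain-level representative of $\Gb$ all compatible is where the bookkeeping needs care; the underlying algebraic content is merely the single-edge computation of the commutators of $\Gn$ with $m$ and $\GD$.
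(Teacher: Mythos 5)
Your proof is correct, and it takes a genuinely different route through the chain-level computation than the paper. Both arguments begin from the same description of $\Gb$ (Attention~\ref{attn:beta-formula}) and reduce the lemma to evaluating $\tfrac12\bigl(d\Gn\pm\Gn d\bigr)$ on an integer lift of a cocycle. The paper works with the \emph{anticommutator} $\Gd=d\Gn+\Gn d$, which does not reduce mod~$2$ to $d_T$ on arbitrary chains; it therefore proceeds by a generator-by-generator path count ($\CalL^{\Bz}_{\Bx}$, $\CalU^{\Bz}_{\Bx}$ and the type table of Figure~\ref{fig:LU-xz}) and invokes the cocycle condition $d(\Bc)\equiv0\bmod 2$ in a genuinely load-bearing way to show that the type-$B$ contributions (together with the off-crossing $\widetilde\CalL$ part) sum to an even number. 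You instead work with the \emph{commutator} and observe the stronger, universal chain-level identity $\tfrac12(d\Gn-\Gn d)=d_T^{m}-d_T^{s}$ over $\Z$, hence $\equiv d_T\bmod 2$ as a map on all of $\CalC(D)$, which follows from the two one-line local computations $m\Gn-\Gn m=2m_T$ and $\GD\Gn-\Gn\GD=-2\GD_T$. In your version the type-$B$ and $\widetilde\CalL/\widetilde\CalU$ contributions cancel automatically in the commutator, so the cocycle condition is only needed to make sense of $\Gb$, not to dispose of extra terms; the passage from the anticommutator that naturally arises from $\Gb\Gn^*+\Gn^*\Gb$ to your commutator is the observation that $\tfrac12(d\Gn+\Gn d)(\Bc)-\tfrac12(d\Gn-\Gn d)(\Bc)=\Gn d(\Bc)\equiv 0\bmod 2$ on cocycles. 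Your argument is somewhat cleaner and proves a sharper statement, at the cost of the sign bookkeeping you already flag (the $-2d_T^{s}$ discrepancy, the derivation extension $\Gn\otimes1+1\otimes\Gn$, and the shared signs $\Ge(s_+,s_-)$), all of which you handle correctly.
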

We postpone the proof of this lemma until the next section.

\begin{proof}[Proof of Theorem~\ref{thm:main-result}]
Recall that our task is to prove that if $L$ is $\Z_2$H-thin, then $\CalH(L)$
contains no torsion elements of order $2^k$ for $k>1$. By
Corollary~\ref{cor:Bock-collapse}, it is enough to show that the
Bockstein spectral sequence collapses at the second page, that is,
$\dim_{\Z_2}B_2=\dim_{\Z_2}B_\infty=\rk\CalH_\Q(L)$. Now,
$\rk\CalH_\Q(L)=|\tJ_L(\sqrt{-1})|+2^{c-1}$, where $c$ is the number of
components of $L$, by Theorem~\ref{thm:lee-pattern} and
$\dim_{\Z_2}B_1=\dim_{\Z_2}(\CalH_{\Z_2}(L))=2|\tJ_L(\sqrt{-1})|$
by Theorem~\ref{thm:me-pattern}. Since $b_1=\Gb$, it only remains to show that
$\rk_{\Z_2}\Gb=\frac12(\dim_{\Z_2}B_1-\dim_{\Z_2}B_2)=
\frac12|\tJ_L(\sqrt{-1})|-2^{c-2}$.

Since $L$ is $\Z_2$H-thin, the Turner spectral sequence collapses at the
second page, that is, $\dim_{\Z_2}E^T_2=\dim_{\Z_2}E^T_\infty=2^c$ by 
Theorem~\ref{thm:Turner-spectral}. Since $E^T_1=\CalH_{\Z_2}(L)$ and
$d_1=d^*_T$, we have that $\rk_{\Z_2}(d^*_T)=
\frac12\bigl(\dim_{\Z_2}E^T_1-\dim_{\Z_2}E^T_2\bigr)=
|\tJ_L(\sqrt{-1})|-2^{c-1}$.

Let $\CalH_l$ and $\CalH_u$ be (graded) subgroups of $\CalH_{\Z_2}(L)$
supported on the lower and upper diagonals, respectively. Then
$\CalH_{\Z_2}(L)\simeq\CalH_l\oplus\CalH_u$ since $L$ is $\Z_2$H-thin.
Finally, we observe that $\Gn^*\big|_{\CalH_l}$ is an isomorphism between
$\CalH_l$ and $\CalH_u$ by Theorem~\ref{thm:me-diff},
$\Gn^*\big|_{\CalH_u}=0$, $\Gb\big|_{\CalH_l}=0$, and
$\rk_{\Z_2}\Gb=\rk_{\Z_2}\bigl(\Gb\big|_{\CalH_u}\bigr)$
since $\Gb$ is trivial everywhere else. We conclude that
$\rk_{\Z_2}\bigl(d^*_T\big|_{\CalH_l}\bigr)=
\rk_{\Z_2}\bigl(\Gb\circ\Gn^*\big|_{\CalH_l}\bigr)=
\rk_{\Z_2}\bigl(\Gb\big|_{\CalH_u}\bigr)=\rk_{\Z_2}\Gb$ and
$\rk_{\Z_2}\bigl(d^*_T\big|_{\CalH_u}\bigr)=
\rk_{\Z_2}\bigl(\Gn^*\circ\Gb\big|_{\CalH_u}\bigr)=
\rk_{\Z_2}\bigl(\Gb\big|_{\CalH_u}\bigr)=\rk_{\Z_2}\Gb$.
Hence, $\rk_{\Z_2}(d^*_T)=\rk_{\Z_2}\bigl(d^*_T\big|_{\CalH_l}\bigr)+
\rk_{\Z_2}\bigl(d^*_T\big|_{\CalH_u}\bigr)=2\rk_{\Z_2}\Gb$.
\end{proof}

Proof of Theorem~\ref{thm:main-result} suggests that there is a deeper 
relation between Turner and Bockstein spectral sequences on $\CalH_{\Z_2}(L)$.
We end this section with a couple of conjectures.

\begin{myconj}
There exists an algebraic relation between Turner and Bockstein differentials
on higher pages in the corresponding spectral sequences. This relation should
involve higher order generalizations of $\Gn$.
\end{myconj}

\begin{myconj}
If the Turner spectral sequence on $\CalH_{\Z_2}(L)$ collapses at the $r$-th
page, then the Bockstein one collapses at the $r$-th page as well. In
particular, if $\CalH_{\Z_2}(L)$ is supported on $r$ adjacent diagonals, then
$\CalH(L)$ does not have torsion elements of order $2^k$ for $k\ge r$.
\end{myconj}

\section{Proof of Lemma~\ref{lem:diff-commucator}}
\label{sec:lemma-proof}

Recall that we have to prove that $d^*_T=\Gb\circ\Gn^*+\Gn^*\circ\Gb$ on
$\CalH_{\Z_2}(L)$. For a (co)chain $\Bc\in\CalC_{\Z_2}(D)$ and a generator
$\Bx$ of $\CalC(D)$ (see Section~\ref{sec:generators}), denote by $\Bc|_\Bx$
the coefficient of $\Bx$ in $\Bc$. We say that {\em $\Bx$ is from $\Bc$} and
write $\Bx\in\Bc$ if $\Bc|_\Bx=1$ and we say that {\em $\Bx$ is not from
$\Bc$} and write $\Bx\not\in\Bc$ otherwise. Finally, we denote the Kauffman
state that corresponds to $\Bx$ by $s(\Bx)$.

Fix a bigrading $(i,j)$ on $\CalC(D)$ so that
$d_T:\CalC^{i,j}_{\Z_2}(D)\to\CalC^{i+1,j+2}_{\Z_2}(D)$.
For a (co)homology class $[\Bc]_2\in\CalH^{i,j}_{\Z_2}(L)$ represented by a
(co)cycle $\Bc\in\CalC^{i,j}_{\Z_2}$, the connecting homomorphism $b$
from~\ref{attn:beta-formula} is defined as
$b([\Bc]_2)=\bigl[\frac12d(\Bc)\bigr]\in\CalH^{i+1,j}(L)$, where $\Bc$ is
lifted to $\CalC(D)$ so that $d(\Bc)$ makes sense. It follows that
\begin{equation}\label{eq:beta-def}
\Gb([\Bc]_2)=\left[\frac12d(\Bc)\right]_2\in\CalH^{i+1,j}_{\Z_2}(L),
\end{equation}
where $[\,\bullet\,]_2$ means taking the homology class over $\Z_2$.
Let $\Gd:\CalC^{i,j}(D)\to\CalC^{i+1,j+2}(D)$ be the homomorphism defined as
$\Gd=d\circ\Gn+\Gn\circ d$. Formula \eqref{eq:beta-def} implies that
\begin{equation}\label{eq:bn+nb}
(\Gb\circ\Gn^*+\Gn^*\circ\Gb)([\Bc]_2)=\left[\frac12\Gd(\Bc)\right]_2=
\left[\frac12\sum_{\Bx\in\Bc}\Gd(\Bx)\right]_2.
\end{equation}

Given a generator $\Bx$ of $\CalC^{i,j}(D)$, and a generator $\Bz$ of 
$\CalC^{i+1,j+2}(D)$, denote by $\CalL_\Bx^\Bz$ the set of all generators
$\By$ of $\CalC^{i+1,j}(D)$ such that $\By\in d(\Bx)$ and $\Bz\in\Gn(\By)$.
Similarly, denote by $\CalU_\Bx^\Bz$ the set of all generators
$\By'$ of $\CalC^{i,j+2}(D)$ such that $\By'\in\Gn(\Bx)$ and $\Bz\in d(\By)$.
We denote elements of $\CalL_\Bx^\Bz$ and $\CalU_\Bx^\Bz$ by
\hbox{$\Bx\to\By\to\Bz$} and \hbox{$\Bx\to\By'\to\Bz$}, respectively.
Since $\Gn$ does not change the underlying Kauffman states of the generators, 
$\CalL_\Bx^\Bz$ and $\CalU_\Bx^\Bz$ are empty when states $s(\Bx)$ and
$s(\Bz)$ are not adjacent. Hence, we are going to assume that $s(\Bx)$ and
$s(\Bz)$ are adjacent states from now on.
By definition,
$\Gd(\Bx)|_\Bz=\Ge_\Bx^\Bz\bigl(|\CalL_\Bx^\Bz|+|\CalU_\Bx^\Bz|\bigr)$,
where $|\bullet|$ denotes the cardinality of a set and
$\Ge_\Bx^\Bz=\Ge(s(\Bx),s(\Bz))$, see Definition~\ref{def:Khovanov}.

Let $\widetilde\CalL_\Bx^\Bz\subset\CalL_\Bx^\Bz$ consist of all
$(\Bx\to\By\to\Bz)\in\CalL_\Bx^\Bz$ such that the circle of $s(\By)$ where
$X$ is replaced with $1$ in $\Bz$ does not pass through the crossing at which
$s(\Bx)$ and $s(\By)$ differ.
We define $\widetilde\CalU_\Bx^\Bz\subset\CalU_\Bx^\Bz$ similarly.
Let $\widehat\CalL_\Bx^\Bz=\CalL_\Bx^\Bz\setminus\widetilde\CalL_\Bx^\Bz$ and
$\widehat\CalU_\Bx^\Bz=\CalU_\Bx^\Bz\setminus\widetilde\CalU_\Bx^\Bz$.
There is a natural bijection between sets $\widetilde\CalL_\Bx^\Bz$ and
$\widetilde\CalU_\Bx^\Bz$ since changes made under $d$ and $\Gn$ take place
on circles that do not interfere with each other. Hence,
$|\widetilde\CalL_\Bx^\Bz|=|\widetilde\CalU_\Bx^\Bz|$.

We list elements of $\widehat\CalL_\Bx^\Bz$ and $\widehat\CalU_\Bx^\Bz$ 
for all possible $\Bx$ and $\Bz$ in Figure~\ref{fig:LU-xz}, where we omit
common parts of $\Bx$, $\Bz$, $\By$ and $\By'$.
We classify pairs $(\Bx,\Bz)$ as being of four different types, $A$, $B$,
$C_m$ and $C_\GD$, depending on the outcome (see Figure~\ref{fig:LU-xz}) and
denote the type of $(\Bx,\Bz)$ by $t(\Bx,\Bz)$.
Then $|\widehat\CalL_\Bx^\Bz|=|\widehat\CalU_\Bx^\Bz|=0$ if $t(\Bx,\Bz)=A$
and $|\widehat\CalL_\Bx^\Bz|=|\widehat\CalU_\Bx^\Bz|=1$ if $t(\Bx,\Bz)=B$.
If $t(\Bx,\Bz)=C_m$, then $|\widehat\CalL_\Bx^\Bz|=0$ and
$|\widehat\CalU_\Bx^\Bz|=2$, and
if $t(\Bx,\Bz)=C_\GD$, then $|\widehat\CalL_\Bx^\Bz|=2$ and
$|\widehat\CalU_\Bx^\Bz|=0$. It follows that
\begin{equation}\label{eq:UL-types}
\begin{split}
\sum_{\Bx\in\Bc}\Ge_\Bx^\Bz|\widehat\CalL_\Bx^\Bz|&=
\sum_{\substack{\Bx\in\Bc\\t(\Bx,\Bz)=B}}\!\!\!\!\Ge_\Bx^\Bz+
  \sum_{\substack{\Bx\in\Bc\\t(\Bx,\Bz)=C_\GD}}\!\!\!\!2\Ge_\Bx^\Bz;\\[2mm]
\sum_{\Bx\in\Bc}\Ge_\Bx^\Bz|\widehat\CalU_\Bx^\Bz|&=
\sum_{\substack{\Bx\in\Bc\\t(\Bx,\Bz)=B}}\!\!\!\!\Ge_\Bx^\Bz+
  \sum_{\substack{\Bx\in\Bc\\t(\Bx,\Bz)=C_m}}\!\!\!\!2\Ge_\Bx^\Bz.
\end{split}
\end{equation}

\begin{figure}[t]
\def\arraystretch{1.2}%
$\begin{array}{|c||c|c|c|c|}
\stdhline
\vrule width 0pt height 12pt depth 5pt
\hbox{type}&\Bx&\Bz&\widehat\CalL_\Bx^\Bz&\widehat\CalU_\Bx^\Bz\\
\stdhline
A&1\otimes 1&1&\varnothing&\varnothing\\
A&1\otimes 1&X&\varnothing&\varnothing\\
B&1\otimes X&1&1\otimes X\to X\to 1&1\otimes X\to 1\otimes1\to 1\\
A&1\otimes X&X&\varnothing&\varnothing\\
B&X\otimes 1&1&X\otimes 1\to X\to 1&X\otimes 1\to 1\otimes1\to 1\\
A&X\otimes 1&X&\varnothing&\varnothing\\
A&X\otimes X&1&\varnothing&\varnothing\\
C_m&X\otimes X&X&\varnothing&X\otimes X\to 1\otimes X\to X\\
   &          & &&X\otimes X\to X\otimes 1\to X\\
\stdhline
C_\GD&1&1\otimes 1&1\to 1\otimes X\to 1\otimes 1&\varnothing\\
     & &          &1\to X\otimes 1\to 1\otimes 1&\\
A&1&1\otimes X&\varnothing&\varnothing\\
A&1&X\otimes 1&\varnothing&\varnothing\\
A&1&X\otimes X&\varnothing&\varnothing\\
A&X&1\otimes 1&\varnothing&\varnothing\\
B&X&1\otimes X&X\to X\otimes X\to 1\otimes X&X\to 1\to 1\otimes X\\
B&X&X\otimes 1&X\to X\otimes X\to X\otimes 1&X\to 1\to X\otimes 1\\
A&X&X\otimes X&\varnothing&\varnothing\\
\stdhline
\end{array}$
\caption{$\widehat\CalL_\Bx^\Bz$ and $\widehat\CalU_\Bx^\Bz$ for different
types of $(\Bx$, $\Bz)$}
\label{fig:LU-xz}
\end{figure}

Therefore, for every $[\Bc]_2\in\CalH^{i,j}_{\Z_2}(L)$ and every generator
$\Bz$ of $\CalC^{i+1,j+2}(D)$ we have:
\begin{equation}\label{eq:delta-comp}
\begin{split}
\frac12\Gd(\Bc)\big|_\Bz&=
\frac12\sum_{\Bx\in\Bc}\Gd(\Bx)\big|_\Bz\\
&=\frac12\sum_{\Bx\in\Bc}\Ge_\Bx^\Bz
  \bigl(|\CalL_\Bx^\Bz|+|\CalU_\Bx^\Bz|\bigr)\\
&=\frac12\sum_{\Bx\in\Bc}\Ge_\Bx^\Bz
  \bigl(|\widetilde\CalL_\Bx^\Bz|+|\widetilde\CalU_\Bx^\Bz|\bigr)
 +\frac12\sum_{\Bx\in\Bc}\Ge_\Bx^\Bz
  \bigl(|\widehat\CalL_\Bx^\Bz|+|\widehat\CalU_\Bx^\Bz|\bigr)\\
&=\sum_{\Bx\in\Bc}\Ge_\Bx^\Bz|\widetilde\CalL_\Bx^\Bz|
 +\sum_{\substack{\Bx\in\Bc\\t(\Bx,\Bz)=B}}\!\!\!\!\Ge_\Bx^\Bz
 +\sum_{\substack{\Bx\in\Bc\\t(\Bx,\Bz)=C_\GD}}\!\!\!\!\Ge_\Bx^\Bz
 +\sum_{\substack{\Bx\in\Bc\\t(\Bx,\Bz)=C_m}}\!\!\!\!\Ge_\Bx^\Bz,
\end{split}
\end{equation}
where the last equality follows from \eqref{eq:UL-types} and the fact that
$|\widetilde\CalL_\Bx^\Bz|=|\widetilde\CalU_\Bx^\Bz|$.

Since $\Bc$ is a (co)cycle modulo $2$, for every generator $\By$ of
$\CalC^{i+1,j}(D)$ we have that $d(\Bc)|_\By$ is an even number. It follows
that $\sum_{\Bx\in\Bc}\Ge_\Bx^\Bz|\CalL_\Bx^\Bz|$ is even as well.
From~\eqref{eq:UL-types} we have:
\begin{equation}
\sum_{\Bx\in\Bc}\Ge_\Bx^\Bz|\CalL_\Bx^\Bz|=
\sum_{\Bx\in\Bc}\Ge_\Bx^\Bz|\widetilde\CalL_\Bx^\Bz|+
  \sum_{\Bx\in\Bc}\Ge_\Bx^\Bz|\widehat\CalL_\Bx^\Bz|=
\sum_{\Bx\in\Bc}\Ge_\Bx^\Bz|\widetilde\CalL_\Bx^\Bz|+
  \!\!\!\!\sum_{\substack{\Bx\in\Bc\\t(\Bx,\Bz)=B}}\!\!\!\!\Ge_\Bx^\Bz+
  \;2\!\!\!\!\!\sum_{\substack{\Bx\in\Bc\\t(\Bx,\Bz)=C_\GD}}\!\!\!\!\Ge_\Bx^\Bz.
\end{equation}
Therefore, $\displaystyle\sum_{\Bx\in\Bc}\Ge_\Bx^\Bz|\widetilde\CalL_\Bx^\Bz|+
\!\!\!\!\sum_{\substack{\Bx\in\Bc\\t(\Bx,\Bz)=B}}\!\!\!\!\Ge_\Bx^\Bz$ is an
even number. Denote it by $2N$ for some $N\in\Z$.
It follows from~\eqref{eq:delta-comp} that
\begin{equation}\label{eq:delta-comp2}
\begin{split}
\frac12\Gd(\Bc)\big|_\Bz&=
2N+\!\!\!\!\sum_{\substack{\Bx\in\Bc\\t(\Bx,\Bz)=C_\GD}}\!\!\!\!\Ge_\Bx^\Bz
 +\!\!\!\!\sum_{\substack{\Bx\in\Bc\\t(\Bx,\Bz)=C_m}}\!\!\!\!\Ge_\Bx^\Bz\\
&\equiv \sum_{\substack{\Bx\in\Bc\\t(\Bx,\Bz)=C_\GD}}\!\!\!\!1
 +\!\!\!\!\sum_{\substack{\Bx\in\Bc\\t(\Bx,\Bz)=C_m}}\!\!\!\!1\mod2\\[2mm]
&\equiv d_T(\Bc)\big|_\Bz\mod2
\end{split}
\end{equation}
by the definition~\eqref{eq:T-diff} of $d_T$.

Since this is true for every generator $\Bz$ of $\CalC^{i+1,j+2}(D)$, we have
that
\begin{equation}
d^*_T([\Bc]_2)=[d_T(\Bc)]_2=\left[\frac12\Gd(\Bc)\right]_2=
(\Gb\circ\Gn^*+\Gn^*\circ\Gb)([\Bc]_2)
\end{equation}
for every $[\Bc]_2\in\CalH^{i,j}_{\Z_2}(L)$ and, hence,
$d^*_T=\Gb\circ\Gn^*+\Gn^*\circ\Gb$, as desired.\qed

\raggedright


\begin{thebibliography}{mmm}
\bibitem[AP]{Asaeda-Przytycki}
M.~Asaeda and J.~Przytycki, {\sl Khovanov homology: torsion and thickness} in
    Advances in topological quantum field theory, 135--166, Kluwer Acad.
    Publ., Dordrecht, 2004; arXiv:math.GT/0402402.

\bibitem[BN]{BN-first}
D.~Bar-Natan, {\sl On Khovanov's categorification of the Jones polynomial},
   Alg. Geom. Top., {\bf 2} (2002) 337--370; arXiv:math.QA/0201043.

\bibitem[G]{Garoufalidis}
S.~Garoufalidis, {\sl A conjecture on Khovanov's invariants}, Fund. Math.
    {\bf 184} (2004), 99--101.

\bibitem[HTh]{Knotscape}
J.~Hoste and M.~Thistlethwaite, {\tt Knotscape} --- a program for
    studying knot theory and providing convenient access to tables of
    knots, {\tt http://www.math.utk.edu/\~{}morwen/knotscape.html}

\bibitem[J]{Jones}
V.~Jones, {\sl A polynomial invariant for knots via von Neumann algebras},
   Bull. Amer. Math. Soc. {\bf 12} (1985), 103--111.

\bibitem[Kh1]{Kh-Jones}
M.~Khovanov, {\sl A categorification of the Jones polynomial},
    Duke Math. J. {\bf 101} (2000), no.~3, 359--426; arXiv:math.QA/9908171.

\bibitem[Kh2]{Kh-patterns}
M.~Khovanov, {\sl Patterns in knot cohomology I}, Experiment. Math. {\bf 12}
    (2003), no. 3, 365--374; arXiv:math.QA/0201306.

\bibitem[Kh3]{Kh-Frobenius}
M.~Khovanov, {\sl Link homology and Frobenius extensions},
   Fundamenta Mathematicae, {\bf 190} (2006), 179--190; arXiv:math.QA/0411447.

\bibitem[L1]{Lee-H_slim}
E.~S.~Lee, {\sl The support of the Khovanov's invariants for alternating
    knots}, arXiv:math.GT/0201105.

\bibitem[L2]{Lee-patterns}
E.~S.~Lee, {\sl An endomorphism of the Khovanov invariant}, Adv. Math.
    {\bf 197} (2005), no. 2, 554--586; arXiv:math.GT/0210213.

\bibitem[MC]{McCleary-Bockstein}
J.~McCleary, {\sl A user's guide to spectral sequences}, Cambridge Studies in
    Adv. Math., {\bf 58}, Cambridge Univ. Press, Cambridge, 2001.

\bibitem[PPS]{Jozef-Radmila+Milena-torsion}
M.~Pabiniak, J.~Przytycki, and R.~Sazdanovi\'c, {\sl On the first group of the
    chromatic cohomology of graphs}, Geom. Dedicata, {\bf 140} (2009), no. 1,
    19--48; arXiv:math.GT/0607326.

\bibitem[PS]{Jozef-Radmila-torsion}
J.~Przytycki and R.~Sazdanovi\'c, {\sl Torsion in Khovanov homology of
    semi-adequate links}, to appear in Fund. Math.; arXiv:1210.5254.

\bibitem[Sh]{me-torsion}
A.~Shumakovitch, {\sl Torsion of Khovanov homology},
    Fund. Math. {\bf 225} (2014), 343--364; arXiv:math.GT/0405474.

\bibitem[Tu]{Turner-diff}
P.~Turner, {\sl Calculating Bar-Natan's characteristic two Khovanov homology},
   J. Knot Th. Ramif. {\bf 15} (2006), no. 10, 1335--1356; arXiv:math/0411225.

\end{thebibliography}
\end{document}